\documentclass{article}
\usepackage{amssymb}


\newtheorem{theorem}{Theorem}[section]

\newtheorem{condition}{Condition}[section]

\newtheorem{criterion}{Criterion}[section]
\newtheorem{definition}{Definition}[section]

\newtheorem{proposition}{Proposition}[section]

\newenvironment{proof}[1][Proof]{\noindent\textbf{#1.} }{\ \rule{0.5em}{0.5em}}

\input{tcilatex}
\begin{document}

\title{Automorphic Equivalence in the Varieties of Representations of Lie
algebras.}
\author{A. Tsurkov \\
Mathematical Department, CCET,\\
Federal University of Rio Grande do Norte (UFRN),\\
Av. Senador Salgado Filho, 3000,\\
Campus Universit\'{a}rio, Lagoa Nova, \\
Natal - RN - Brazil - CEP 59078-970 \\
arkady.tsurkov@gmail.com}
\maketitle

\begin{abstract}
In this paper we consider the very wide class of varieties of
representations of Lie algebras over the field $k$, which has characteristic 
$0$. We study the relation between the geometric equivalence and automorphic
equivalence of the representations of these varieties.

If we denote by $\Theta $ one of these varieties, then $\Theta ^{0}$ is a
category of the finite generated free representations of the variety $\Theta 
$. In this paper, we calculate for the considered varieties the quotient
group $\mathfrak{A/Y}$, where $\mathfrak{A}$ is a group of all the
automorphisms of the category $\Theta ^{0}$ and $\mathfrak{Y}$ is a subgroup
of all the inner automorphisms of this category. The quotient group $%
\mathfrak{A/Y}$ measures the difference between the geometric equivalence
and automorphic equivalence of representations of the variety $\Theta $.

In \cite{ShestTsur}, the situation when $\Theta $ is the variety of all the
representations of Lie algebras over the field $k$, which has characteristic 
$0$, was considered. The problem was resolved by the reduction to some
variety of the one-sorted algebraic structures, so, the considerations were
somewhat long and sophisticated. The many-sorted approach to the method of
verbal operations for computation of the quotient group $\mathfrak{A/Y}$ was
elaborated on in \cite{TsurkovManySorted}. By this approach, the result of 
\cite{ShestTsur} was proven again in \cite{TsurkovManySorted} in a simpler
manner. The method of \cite{TsurkovManySorted} allows us to achieve in this
paper a result for the wide class of subvarieties of the variety of all the
representations of Lie algebras.

In other classes of algebraic structures, we have a completely different
situation. In the variety of all the groups, a similar result was achieved
only for the subvariety of all the Abelian groups \cite{PlotkinZhitom} and
for the subvarieties of all the nilpotent groups of the class $\leq d$,
where $d\in 
\mathbb{N}
$, $d\geq 2$ \cite{TsurkovNilpotent}. In the theory of the representations
of groups a similar result was achieved only for the variety of all the
representations of groups in \cite{PlotkinZhitom} and proved again in \cite%
{TsurkovManySorted}.

In Section \ref{example}, we present one example of the subvariety $\Theta $
of the variety of all the representations of the Lie algebras over the field 
$k$, and two representations from the variety $\Theta $ which are
automorphically equivalent but not geometrically equivalent.
\end{abstract}

\section{Introduction\label{intro}}

\setcounter{equation}{0}

A compilation of all the definitions of the basic notions of the universal
algebraic geometry can be found, for example, in \cite{PlotkinVarCat}, \cite%
{PlotkinNotions} and \cite{PlotkinSame}. Also, there are the fundamental
articles \cite{BMR} and \cite{MR}. The natural question of the universal
algebraic geometry is: "When do two universal algebras $H_{1}$ and $H_{2}$
from the some variety $\Theta $ have the same algebraic geometry"? But,
first of all, what does it mean that "two algebras have same algebraic
geometry"? The two notions of geometric equivalence and automorphic
equivalence can give a answer to this question. This article focuses on the
relationship between these notions in a very wide class of the varieties of
the representations of the Lie algebras over the field $k$, which has
characteristic $0$.

We consider the representations of the Lie algebras as two-sorted universal
algebras: the first sort is a sort of elements of Lie algebras, and the
second sort is a sort of vectors of linear spaces. Therefore, we will
consider all basic notions of the universal algebraic geometry in the
many-sorted version as in \cite{TsurkovManySorted}. We suppose that there is
a finite set of names of sorts $\Gamma $. In our case $\Gamma =\left\{
1,2\right\} $. Many-sorted algebra, first of all, is a set $H$ with the
"sorting": mapping $\eta _{H}:H\rightarrow \Gamma $. The set of elements of
the sort $i$, where $i\in \Gamma $, of the algebra $H$ will be the set $\eta
_{H}^{-1}\left( i\right) $. We denote $\eta _{H}^{-1}\left( i\right)
=H^{\left( i\right) }$. If $h\in H^{\left( i\right) }$, then many times we
will denote $h=h^{\left( i\right) }$, with a view to emphasizing that $h$ is
an element of the sort $i$.

We denote by $\Omega $ the signature (set of operations) of our algebras.
Every operation $\omega \in \Omega $ has a type $\tau _{\omega }=\left(
i_{1},\ldots ,i_{n};j\right) $, where $n\in 
\mathbb{N}
$, $i_{1},\ldots ,i_{n},j\in \Gamma $. Operation $\omega \in \Omega $ of the
type $\left( i_{1},\ldots ,i_{n};j\right) $ is a partially defined mapping $%
\omega :H^{n}\rightarrow H$. This mapping is defined only for tuples $\left(
h_{1},\ldots ,h_{n}\right) \in H^{n}$ such that $h_{k}\in $ $H^{\left(
i_{k}\right) }$, $1\leq k\leq n$. The images of these tuples are elements of
the sort $j$: $\omega \left( h_{1},\ldots ,h_{n}\right) \in H^{\left(
j\right) }$.

In our case the signature $\Omega $ of the representations of the Lie
algebras has this form: 
\begin{equation}
\Omega =\left\{ 0^{\left( 1\right) },-^{\left( 1\right) },\lambda ^{\left(
1\right) }\left( \lambda \in k\right) ,+^{\left( 1\right) },\left[ ,\right]
,0^{\left( 2\right) },-^{\left( 2\right) },\lambda ^{\left( 2\right) }\left(
\lambda \in k\right) ,+^{\left( 2\right) },\circ \right\} .  \label{repAssin}
\end{equation}%
$0^{\left( 2\right) }$ is the $0$-ary operation of taking the zero vector in
the linear space, $\tau _{0^{\left( 2\right) }}=\left( 2\right) $. $%
-^{\left( 2\right) }$ is the unary operation of taking the negative vector
in the linear space, $\tau _{-^{\left( 2\right) }}=\left( 1;2\right) $. $%
+^{\left( 2\right) }$ is the operation of addition of the vectors of the
linear space, $\tau _{+^{\left( 2\right) }}=\left( 2,2;2\right) $. For every 
$\lambda \in k$ we have the unary operation of multiplication of vectors
from the linear space by the scalar $\lambda $. We denote this operation by $%
\lambda $ and $\tau _{\lambda }=\left( 2;2\right) $. $0^{\left( 1\right) }$, 
$-^{\left( 1\right) }$, $\lambda ^{\left( 1\right) }\left( \lambda \in
k\right) $, $+^{\left( 1\right) }$ are the similar operations in the Lie
algebra. $\left[ ,\right] $ is the Lie brackets; this operation has type $%
\tau _{\left[ ,\right] }=\left( 1,1;1\right) $. $\circ $ is an operation of
the action of elements of the Lie algebra on vectors from the linear space, $%
\tau _{\circ }=\left( 1,2;2\right) $.

In universal algebraic geometry we consider some variety $\Theta $ of
universal algebras of the signature $\Omega $. We denote by $%
X_{0}=\bigcup\limits_{i\in \Gamma }X_{0}^{\left( i\right) }$ a set of
symbols, such that $X_{0}^{\left( i\right) }$ is an infinite countable set
for every $i\in \Gamma $ and $X_{0}^{\left( i\right) }\cap X_{0}^{\left(
j\right) }=\varnothing $ when $i\neq j$. By $\mathfrak{F}\left( X_{0}\right) 
$ we denote the set of all finite subsets of $X_{0}$. We will consider the
category $\Theta ^{0}$, whose objects are all free algebras $F\left(
X\right) $ of the variety $\Theta $ generated by finite subsets $X\in 
\mathfrak{F}\left( X_{0}\right) $, such that $\left( F\left( X\right)
\right) ^{\left( i\right) }\supseteq X\cap X_{0}^{\left( i\right) }$.
Morphisms of the category $\Theta ^{0}$ are homomorphisms of these algebras.
We will occasionally denote $F\left( X\right) =F\left( x_{1},x_{2},\ldots
,x_{n}\right) $ if $X=\left\{ x_{1},x_{2},\ldots ,x_{n}\right\} $ and even $%
F\left( X\right) =F\left( x\right) $ if $X$ has only one element.

We consider a "system of equations" $T\subseteq \dbigcup\limits_{i\in \Gamma
}\left( \left( F\right) ^{\left( i\right) }\right) ^{2}$, where $F\in 
\mathrm{Ob}\Theta ^{0}$ (see \cite[Section 4]{TsurkovManySorted}), and we
"resolve" these equations in arbitrary algebra $H\in \Theta $. The set $%
\mathrm{Hom}\left( F,H\right) $ serves as an "affine space over the algebra $%
H$": the solution of the system $T$ is a homomorphism $\mu \in \mathrm{Hom}%
\left( F,H\right) $ such that $\mu \left( t_{1}\right) =\mu \left(
t_{2}\right) $ holds for every $\left( t_{1},t_{2}\right) \in T$ or $%
T\subseteq \ker \mu $. $T_{H}^{\prime }=\left\{ \mu \in \mathrm{Hom}\left(
F,H\right) \mid T\subseteq \ker \mu \right\} $ will be the set of all the
solutions of the system $T$. For every set of "points" $R\subseteq \mathrm{%
Hom}\left( F,H\right) $ we consider a congruence of equations defined in
this way: $R_{H}^{\prime }=\bigcap\limits_{\mu \in R}\ker \mu $. This is a
maximal system of equations which has the set of solutions $R$. For every
set of equations $T$ we consider its algebraic closure $T_{H}^{\prime \prime
}=\bigcap\limits_{\mu \in T_{H}^{\prime }}\ker \mu $ with respect to the
algebra $H$. In the many-sorted case it is possible that $\mathrm{Hom}\left(
F,H\right) =\varnothing $, and in this situation $T_{H}^{\prime \prime
}=\dbigcup\limits_{i\in \Gamma }\left( \left( F\right) ^{\left( i\right)
}\right) ^{2}$ holds for every $T\subseteq \dbigcup\limits_{i\in \Gamma
}\left( \left( F\right) ^{\left( i\right) }\right) ^{2}$. A set $T\subseteq
\dbigcup\limits_{i\in \Gamma }\left( \left( F\right) ^{\left( i\right)
}\right) ^{2}$ is called $H$-closed if $T=T_{H}^{\prime \prime }$. An $H$%
-closed set is always a congruence. We denote the family of all $H$-closed
congruences in $F$ by $Cl_{H}(F)$.

\begin{definition}
Algebras $H_{1},H_{2}\in \Theta $ are \textbf{geometrically equivalent} if
and only if for every $F\in \mathrm{Ob}\Theta ^{0}$ and every $T\subseteq
\dbigcup\limits_{i\in \Gamma }\left( \left( F\right) ^{\left( i\right)
}\right) ^{2}$ the equality $T_{H_{1}}^{\prime \prime }=T_{H_{2}}^{\prime
\prime }$ is fulfilled.
\end{definition}

By this definition, algebras $H_{1},H_{2}\in \Theta $ are geometrically
equivalent if and only if the families $Cl_{H_{1}}(F)$ and $Cl_{H_{2}}(F)$
coincide for every $F\in \mathrm{Ob}\Theta ^{0}$.

\begin{definition}
\label{Autom_equiv}\cite{PlotkinSame}We say that \textit{algebras }$%
H_{1},H_{2}\in \Theta $\textit{\ are \textbf{automorphically equivalent} if
there exist an automorphism }$\Phi :\Theta ^{0}\rightarrow \Theta ^{0}$%
\textit{\ and the bijections}%
\[
\alpha (\Phi )_{F}:Cl_{H_{1}}(F)\rightarrow Cl_{H_{2}}(\Phi (F)) 
\]%
for every $F\in \mathrm{Ob}\Theta ^{0}$, \textit{coordinated in the
following sense: if }$F_{1},F_{2}\in \mathrm{Ob}\Theta ^{0}$\textit{, }$\mu
_{1},\mu _{2}\in \mathrm{Hom}\left( F_{1},F_{2}\right) $\textit{, }$T\in
Cl_{H_{1}}(F_{2})$\textit{\ then}%
\[
\tau \mu _{1}=\tau \mu _{2}, 
\]%
\textit{if and only if }%
\[
\widetilde{\tau }\Phi \left( \mu _{1}\right) =\widetilde{\tau }\Phi \left(
\mu _{2}\right) , 
\]%
\textit{where }$\tau :F_{2}\rightarrow F_{2}/T$\textit{, }$\widetilde{\tau }%
:\Phi \left( F_{2}\right) \rightarrow \Phi \left( F_{2}\right) /\alpha (\Phi
)_{F_{2}}\left( T\right) $\textit{\ are the natural epimorphisms.}
\end{definition}

The definition of the automorphic equivalence in the language the category
of coordinate algebras was considered in \cite{PlotkinSame} and \cite%
{TsurkovManySorted}. Intuitively we can say that algebras $H_{1},H_{2}\in
\Theta $ are automorphically equivalent if and only if the families $%
Cl_{H_{1}}(F)$ and $Cl_{H_{2}}(\Phi \left( F\right) )$ coincide up to a
change of coordinates. This change is defined by the automorphism $\Phi $.

\begin{definition}
\label{inner}An automorphism $\Upsilon $ of an arbitrary category $\mathfrak{%
K}$ is \textbf{inner}, if it is isomorphic as a functor to the identity
automorphism of the category $\mathfrak{K}$.
\end{definition}

It means that for every $F\in \mathrm{Ob}\mathfrak{K}$ there exists an
isomorphism $\sigma _{F}^{\Upsilon }:F\rightarrow \Upsilon \left( F\right) $
such that for every $\mu \in \mathrm{Mor}_{\mathfrak{K}}\left(
F_{1},F_{2}\right) $%
\[
\Upsilon \left( \mu \right) =\sigma _{F_{2}}^{\Upsilon }\mu \left( \sigma
_{F_{1}}^{\Upsilon }\right) ^{-1} 
\]%
\noindent holds. It is clear that the set $\mathfrak{Y}$ of all inner
automorphisms of an arbitrary category $\mathfrak{K}$ is a normal subgroup
of the group $\mathfrak{A}$ of all automorphisms of this category.

By \cite[Proposition 9]{PlotkinSame} and \cite[Theorem 4.2]%
{TsurkovManySorted} (many-sorted case), if an inner automorphism $\Upsilon $
provides the automorphic equivalence of the algebras $H_{1}$ and $H_{2}$,
where $H_{1},H_{2}\in \Theta $, then $H_{1}$ and $H_{2}$ are geometrically
equivalent. Therefore the quotient group $\mathfrak{A/Y}$ measures the
possible difference between the geometric equivalence and automorphic
equivalence of algebras from the variety $\Theta $.

From now on, the word \textquotedblleft representation\textquotedblright\
means a representation of the Lie algebra over the field $k$, which has
characteristic $0$.

We will use the method elaborated on in \cite{PlotkinZhitom} to the
one-sorted algebras and in \cite{TsurkovManySorted} to the many-sorted
algebras for the calculation of the quotient group $\mathfrak{A/Y}$ for the
wide class of varieties of representations. To use this method, we study in
Section \ref{identities} the structure of the free representations in the
varieties of representations. Then we will study in Section \ref{Category}
some properties of the category $\Theta ^{0}$, where $\Theta $ is a variety
of representations.

\section{Homogenization of the identities in the representations of the Lie
algebras\label{identities}}

\setcounter{equation}{0}

In this section we want to clarify and generalize a few the considerations
which can be seen in the beginning of \cite{Simomjan}.

We consider an absolutely free representation $F\left( X\right) $ generated
by the set $X=X^{\left( 1\right) }\cup X^{\left( 2\right) }$, such that $%
\left( F\left( X\right) \right) ^{\left( i\right) }\supseteq X^{\left(
i\right) }$, $i=1,2$.$\ \left( F\left( X\right) \right) ^{\left( 1\right)
}=L\left( X^{\left( 1\right) }\right) $ is a free Lie algebra generated by
the set $X^{\left( 1\right) }$. $\left( F\left( X\right) \right) ^{\left(
2\right) }=A\left( X^{\left( 1\right) }\right) X^{\left( 2\right) }$, where $%
A\left( X^{\left( 1\right) }\right) $ is a free associative algebra with
unit generated by the set $X^{\left( 1\right) }$ and%
\[
A\left( X^{\left( 1\right) }\right) X^{\left( 2\right)
}=\bigoplus\limits_{x^{\left( 2\right) }\in X^{\left( 2\right) }}A\left(
X^{\left( 1\right) }\right) x^{\left( 2\right) }= 
\]%
\[
\mathrm{Sp}_{k}\left\{ x_{i_{n}}^{\left( 1\right) }\ldots x_{i_{1}}^{\left(
1\right) }x^{\left( 2\right) }\mid x_{i_{j}}^{\left( 1\right) }\in X^{\left(
1\right) },x^{\left( 2\right) }\in X^{\left( 2\right) }\right\} 
\]%
is a free left $A\left( X^{\left( 1\right) }\right) $-module generated by
the set $X^{\left( 2\right) }$. For every $l\in \left( F\left( X\right)
\right) ^{\left( 1\right) }=L\left( X^{\left( 1\right) }\right) $ and every $%
v\in \left( F\left( X\right) \right) ^{\left( 2\right) }$ we understand $%
l\circ v$ as $\iota \left( l\right) v$, where $\iota :L\left( X^{\left(
1\right) }\right) \rightarrow A\left( X^{\left( 1\right) }\right) $ is an
embedding, which exists by the Poincar\'{e} - Birkhoff - Witt theorem.

Now we consider an arbitrary subvariety $\Theta $ of the variety of all the
representations. Free representation $F_{\Theta }\left( X\right) $ of $%
\Theta $ generated by the set $X=X^{\left( 1\right) }\cup X^{\left( 2\right)
}$ is the representation $F\left( X\right) /\mathrm{Id}_{\Theta }\left(
X\right) $, where $\mathrm{Id}_{\Theta }\left( X\right) $ is a congruence of
all the identities of the variety $\Theta $ which contain variables from the
set $X$. Actually, the free generators of the representation $F_{\Theta
}\left( X\right) $ have a form $\nu \left( x\right) $, where $\nu :$ $%
F\left( X\right) \rightarrow F\left( X\right) /\mathrm{Id}_{\Theta }\left(
X\right) $ is a natural epimorphism, $X$ is a set of free generators of the
representation $F\left( X\right) $, $x\in X$. But we will use the same
symbols from the free generators of the representations $F\left( X\right) $
and $F_{\Theta }\left( X\right) $. $\left( F_{\Theta }\left( X\right)
\right) ^{\left( 1\right) }=L\left( X^{\left( 1\right) }\right) /I_{\Theta
}\left( X^{\left( 1\right) }\right) $, $\left( F_{\Theta }\left( X\right)
\right) ^{\left( 2\right) }=A\left( X^{\left( 1\right) }\right) X^{\left(
2\right) }/V_{\Theta }\left( X\right) $, where $I_{\Theta }\left( X^{\left(
1\right) }\right) $ is an ideal of the $L\left( X^{\left( 1\right) }\right) $%
, $V_{\Theta }\left( X\right) $ is a $A\left( X^{\left( 1\right) }\right) $%
-left submodule of the $A\left( X^{\left( 1\right) }\right) X^{\left(
2\right) }$ and for every $l\in I_{\Theta }\left( X^{\left( 1\right)
}\right) $ and for every $v\in \left( F_{\Theta }\left( X\right) \right)
^{\left( 2\right) }$ the $l\circ v\in V_{\Theta }\left( X\right) $ holds.
The ideal $I_{\Theta }\left( X^{\left( 1\right) }\right) $ and the submodule 
$V_{\Theta }\left( X\right) $ are fully invariant. The ideal $I_{\Theta
}\left( X^{\left( 1\right) }\right) $ is polyhomogeneous by the Theorem of
the homogenization of the identities of linear algebras (see for example 
\cite[Theorem 4.2.2]{BahturinLieIdent}). So the Lie algebra $L_{\Theta
}\left( X^{\left( 1\right) }\right) =L\left( X^{\left( 1\right) }\right)
/I_{\Theta }\left( X^{\left( 1\right) }\right) $ is a graded algebra.

$\left( F\left( X\right) \right) ^{\left( 2\right) }=\bigoplus\limits_{i\in
I}U_{i}$, where $X^{\left( 2\right) }=\left\{ x_{i}^{\left( 2\right) }\mid
i\in I\right\} $, $U_{i}=A\left( X^{\left( 1\right) }\right) x_{i}^{\left(
2\right) }$ is a free left $A\left( X^{\left( 1\right) }\right) $-cyclic
module. Every element $u\in \left( F\left( X\right) \right) ^{\left(
2\right) }$ has unique decomposition $u=\sum\limits_{i\in I_{u}}u_{i}$,
where $I_{u}\subseteq I$, $\left\vert I_{u}\right\vert <\infty $, $%
u_{i}=f_{i}x_{i}^{\left( 2\right) }\in U_{i}$, $f_{i}\in A\left( X^{\left(
1\right) }\right) $. We shell call the elements $u_{i}$ the \textbf{cyclic
components} of the element $u$.

\begin{proposition}
If $v\in V_{\Theta }\left( X\right) $, then all cyclic components of the $v$
are also elements of $V_{\Theta }\left( X\right) $.
\end{proposition}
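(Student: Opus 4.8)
The plan is to exploit the full invariance of the submodule $V_{\Theta}\left( X\right) $ together with the module structure, isolating each cyclic component by a suitable ``coordinate-killing'' endomorphism. Fix an index $i\in I$ and consider the map $\varepsilon _{i}$ determined on the free generators of the absolutely free representation $F\left( X\right) $ by $\varepsilon _{i}\left( x_{i}^{\left( 2\right) }\right) =0^{\left( 2\right) }$ and $\varepsilon _{i}\left( y\right) =y$ for every other free generator $y$. Because $F\left( X\right) $ is absolutely free, any assignment of the generators to elements of the correct sort extends uniquely to an endomorphism, so $\varepsilon _{i}$ is a well-defined endomorphism of $F\left( X\right) $.

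Next I would describe how $\varepsilon _{i}$ acts on the second sort. Since $\varepsilon _{i}$ fixes $X^{\left( 1\right) }$ pointwise, it restricts to the identity on $\left( F\left( X\right) \right) ^{\left( 1\right) }=L\left( X^{\left( 1\right) }\right) $, hence also on $A\left( X^{\left( 1\right) }\right) $; consequently its restriction to $\left( F\left( X\right) \right) ^{\left( 2\right) }=A\left( X^{\left( 1\right) }\right) X^{\left( 2\right) }$ is a homomorphism of left $A\left( X^{\left( 1\right) }\right) $-modules which sends $x_{i}^{\left( 2\right) }$ to $0$ and fixes every other generator $x_{j}^{\left( 2\right) }$. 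Writing the cyclic decomposition $v=\sum_{j\in I_{v}}u_{j}$ with $u_{j}=f_{j}x_{j}^{\left( 2\right) }$, the $A\left( X^{\left( 1\right) }\right) $-linearity of $\varepsilon _{i}$ gives at once $\varepsilon _{i}\left( v\right) =\sum_{j\neq i}f_{j}x_{j}^{\left( 2\right) }=v-u_{i}$, the decomposition $\left( F\left( X\right) \right) ^{\left( 2\right) }=\bigoplus_{i\in I}A\left( X^{\left( 1\right) }\right) x_{i}^{\left( 2\right) }$ guaranteeing that killing $x_{i}^{\left( 2\right) }$ leaves the other components untouched.

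Finally I would invoke the full invariance asserted in the preceding discussion. Because $V_{\Theta }\left( X\right) $ is fully invariant, every endomorphism of $F\left( X\right) $ carries it into itself; in particular $\varepsilon _{i}\left( v\right) \in V_{\Theta }\left( X\right) $ whenever $v\in V_{\Theta }\left( X\right) $. Since $V_{\Theta }\left( X\right) $ is a submodule, and hence closed under subtraction, we obtain $u_{i}=v-\varepsilon _{i}\left( v\right) \in V_{\Theta }\left( X\right) $. As $i\in I$ is arbitrary (and $u_{i}=0$ for $i\notin I_{v}$), every cyclic component of $v$ lies in $V_{\Theta }\left( X\right) $, which is the assertion.

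I expect no serious obstacle. The only points needing care are verifying that $\varepsilon _{i}$ is a genuine representation endomorphism, so that full invariance applies, and that it acts as an $A\left( X^{\left( 1\right) }\right) $-module map fixing the remaining generators; both follow immediately from the freeness of $F\left( X\right) $ and the direct-sum description of $\left( F\left( X\right) \right) ^{\left( 2\right) }$. One could alternatively use the scaling endomorphisms $x_{i}^{\left( 2\right) }\mapsto \lambda x_{i}^{\left( 2\right) }$ together with a Vandermonde argument, valid since $k$ has characteristic $0$ and is therefore infinite, but the single coordinate-killing map already extracts $u_{i}$ in one step.
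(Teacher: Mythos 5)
Your argument is correct and is essentially the paper's: both isolate the cyclic component via a generator-killing endomorphism of the absolutely free representation and then invoke full invariance of $V_{\Theta}\left( X\right) $. The only (immaterial) difference is that the paper's $\chi _{i}$ sends every $x_{j}^{\left( 2\right) }$ with $j\neq i$ to $0^{\left( 2\right) }$ and so maps $v$ directly to $v_{i}$, whereas you kill $x_{i}^{\left( 2\right) }$ alone and recover $u_{i}$ by one extra subtraction inside the submodule.
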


\begin{proof}
We will consider a decomposition of $v$ to the cyclic components: $%
v=\sum\limits_{i\in I_{v}}v_{i}=\sum\limits_{i\in I_{v}}f_{i}x_{i}^{\left(
2\right) }$. For every $i\in I_{v}$, there exists $\chi _{i}\in \mathrm{End}%
\left( F\left( X\right) \right) $ such that $\left( \chi _{i}\right) _{\mid
X^{\left( 1\right) }}=id_{X^{\left( 1\right) }}$, $\chi _{i}\left(
x_{i}^{\left( 2\right) }\right) =x_{i}^{\left( 2\right) }$, $\chi _{i}\left(
x_{j}^{\left( 2\right) }\right) =0^{\left( 2\right) }$ for every $j\in
I\smallsetminus \left\{ i\right\} $. $\chi _{i}\left( v\right) =v_{i}\in
V_{\Theta }\left( X\right) $.
\end{proof}

Therefore $V_{\Theta }\left( X\right) =\bigoplus\limits_{i\in I}\left(
U_{i}\cap V_{\Theta }\left( X\right) \right) $ and%
\[
\left( F_{\Theta }\left( X\right) \right) ^{\left( 2\right) }=\left( F\left(
X\right) \right) ^{\left( 2\right) }/V_{\Theta }\left( X\right) \cong
\bigoplus\limits_{i\in I}\left( U_{i}/\left( U_{i}\cap V_{\Theta }\left(
X\right) \right) \right) . 
\]%
$U_{i}\cap V_{\Theta }\left( X\right) =N_{i}=S_{i}x_{i}^{\left( 2\right) }$,
where $S_{i}$ is a left-side ideal of $A\left( X^{\left( 1\right) }\right) $.

\begin{proposition}
$S_{i}$ is a two-sided polyhomogeneous ideal of $A\left( X^{\left( 1\right)
}\right) $.
\end{proposition}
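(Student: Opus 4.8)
The plan is to derive both properties from the full invariance of $V_{\Theta }\left( X\right) $, by feeding into it two carefully chosen families of endomorphisms of the absolutely free representation $F\left( X\right) $. Recall that, because $U_{i}=A\left( X^{\left( 1\right) }\right) x_{i}^{\left( 2\right) }$ is free cyclic, we have $S_{i}=\left\{ f\in A\left( X^{\left( 1\right) }\right) \mid fx_{i}^{\left( 2\right) }\in V_{\Theta }\left( X\right) \right\} $, that $S_{i}$ is already known to be a left ideal, and that $V_{\Theta }\left( X\right) $ is stable under every endomorphism of $F\left( X\right) $. Since an endomorphism of the free representation $F\left( X\right) $ is uniquely determined by arbitrarily prescribed images of the generators of the correct sort, we have ample freedom to build the endomorphisms we need.

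To prove that $S_{i}$ is a right ideal, fix $a\in A\left( X^{\left( 1\right) }\right) $ and let $\psi \in \mathrm{End}\left( F\left( X\right) \right) $ be the endomorphism fixing every generator of sort $1$ and every generator of sort $2$ except $x_{i}^{\left( 2\right) }$, and sending $x_{i}^{\left( 2\right) }\mapsto ax_{i}^{\left( 2\right) }$. Because $\psi $ fixes $X^{\left( 1\right) }$ pointwise it induces the identity on $A\left( X^{\left( 1\right) }\right) $, so $\psi $ commutes with the action: for every $f\in A\left( X^{\left( 1\right) }\right) $ one computes $\psi \left( fx_{i}^{\left( 2\right) }\right) =f\,\psi \left( x_{i}^{\left( 2\right) }\right) =\left( fa\right) x_{i}^{\left( 2\right) }$, using the associativity of the $A\left( X^{\left( 1\right) }\right) $-action. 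Now if $f\in S_{i}$, then $fx_{i}^{\left( 2\right) }\in V_{\Theta }\left( X\right) $, and full invariance gives $\left( fa\right) x_{i}^{\left( 2\right) }=\psi \left( fx_{i}^{\left( 2\right) }\right) \in V_{\Theta }\left( X\right) $; hence $fa\in S_{i}$. Together with the left-ideal property this shows that $S_{i}$ is two-sided.

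For polyhomogeneity I would run the standard homogenization argument, again through full invariance. For a fixed variable $x_{j}^{\left( 1\right) }$ and a scalar $\lambda \in k$, let $\delta _{j,\lambda }$ be the endomorphism scaling $x_{j}^{\left( 1\right) }\mapsto \lambda x_{j}^{\left( 1\right) }$ and fixing all the remaining generators of both sorts. Writing $f=\sum_{d}f_{d}$ for the decomposition of $f$ into the components $f_{d}$ homogeneous of degree $d$ in $x_{j}^{\left( 1\right) }$, one gets $\delta _{j,\lambda }\left( fx_{i}^{\left( 2\right) }\right) =\sum_{d}\lambda ^{d}f_{d}x_{i}^{\left( 2\right) }$, which lies in $V_{\Theta }\left( X\right) $ for every $\lambda $ whenever $f\in S_{i}$. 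Since $k$ has characteristic $0$ it is infinite, so choosing enough distinct values of $\lambda $ and inverting the resulting Vandermonde system shows that each $f_{d}x_{i}^{\left( 2\right) }\in V_{\Theta }\left( X\right) $, that is, $f_{d}\in S_{i}$. Iterating over all the variables $x_{j}^{\left( 1\right) }\in X^{\left( 1\right) }$ separates $f$ into its multihomogeneous components, each of which lies in $S_{i}$; thus $S_{i}$ is polyhomogeneous.

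I expect the genuine content to be the two-sidedness, where the point is to recognize that shifting the single module generator by $x_{i}^{\left( 2\right) }\mapsto ax_{i}^{\left( 2\right) }$ converts left-multiplication data into right-multiplication data via the associativity of the $A\left( X^{\left( 1\right) }\right) $-action; the verification $\psi \left( fx_{i}^{\left( 2\right) }\right) =\left( fa\right) x_{i}^{\left( 2\right) }$ is where one must be careful that $\psi $ really is a representation endomorphism and that it induces the identity on the acting algebra. The polyhomogeneity is routine once the homogenizing endomorphisms are seen to be admissible, and it mirrors the homogenization theorem already invoked for $I_{\Theta }\left( X^{\left( 1\right) }\right) $.
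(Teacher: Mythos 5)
Your proof is correct and follows essentially the same route as the paper: the two-sidedness is obtained by applying the fully invariant submodule $V_{\Theta}\left( X\right)$ to the endomorphism fixing $X^{\left( 1\right)}$ pointwise and sending $x_{i}^{\left( 2\right) }\mapsto ax_{i}^{\left( 2\right) }$, which is exactly the paper's $\chi_{f}$ up to renaming. The polyhomogeneity via scaling a single variable and a Vandermonde argument over the infinite field is precisely the standard homogenization proof the paper invokes by reference, so you have merely filled in a detail the author left implicit.
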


\begin{proof}
If $s\in S_{i}$, then $sx_{i}^{\left( 2\right) }\in V_{\Theta }\left(
X\right) $. We take $f\in A\left( X^{\left( 1\right) }\right) $. There
exists $\chi _{f}\in \mathrm{End}\left( F\left( X\right) \right) $ such that 
$\left( \chi _{f}\right) _{\mid X^{\left( 1\right) }}=id_{X^{\left( 1\right)
}}$, $\chi _{f}\left( x_{i}^{\left( 2\right) }\right) =fx_{i}^{\left(
2\right) }$, $\chi _{f}\left( x_{j}^{\left( 2\right) }\right) =x_{j}^{\left(
2\right) }$ for every $j\in I\smallsetminus \left\{ i\right\} $. $\chi
_{f}\left( sx_{i}^{\left( 2\right) }\right) =sfx_{i}^{\left( 2\right) }\in
V_{\Theta }\left( X\right) $, so $sf\in S_{i}$.

The proof of the fact that $S_{i}$ is a polyhomogeneous ideal is a very
similar to the proof of the theorem of the homogenization of the identities
of linear algebras.
\end{proof}

Therefore $U_{i}/\left( U_{i}\cap V_{\Theta }\left( X\right) \right)
=A\left( X^{\left( 1\right) }\right) x_{i}^{\left( 2\right)
}/S_{i}x_{i}^{\left( 2\right) }\cong \left( A\left( X^{\left( 1\right)
}\right) /S_{i}\right) x_{i}^{\left( 2\right) }$, where $A\left( X^{\left(
1\right) }\right) /S_{i}=A_{i}$ is a graded algebra.

\begin{proposition}
There exists a two-sided polyhomogeneous ideal $S_{\Theta }\left( X^{\left(
1\right) }\right) \leq A\left( X^{\left( 1\right) }\right) $, such that $%
V_{\Theta }\left( X\right) =S_{\Theta }\left( X^{\left( 1\right) }\right)
X^{\left( 2\right) }=\bigoplus\limits_{i\in I}S_{\Theta }\left( X^{\left(
1\right) }\right) x_{i}^{\left( 2\right) }$.
\end{proposition}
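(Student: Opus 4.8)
The plan is to prove that all the left-side ideals $S_i$ produced above in fact coincide, and then to take $S_{\Theta }\left( X^{\left( 1\right) }\right) $ to be this common ideal. Once the equalities $S_i=S_j$ are in hand for all $i,j\in I$, the statement follows at once from the facts already established: using the decomposition $V_{\Theta }\left( X\right) =\bigoplus_{i\in I}\left( U_i\cap V_{\Theta }\left( X\right) \right) $ together with $U_i\cap V_{\Theta }\left( X\right) =S_ix_i^{\left( 2\right) }$, we obtain $V_{\Theta }\left( X\right) =\bigoplus_{i\in I}S_ix_i^{\left( 2\right) }=\bigoplus_{i\in I}S_{\Theta }\left( X^{\left( 1\right) }\right) x_i^{\left( 2\right) }=S_{\Theta }\left( X^{\left( 1\right) }\right) X^{\left( 2\right) }$, while the assertion that $S_{\Theta }\left( X^{\left( 1\right) }\right) $ is a two-sided polyhomogeneous ideal is precisely the content of the previous Proposition, applied to any one of the $S_i$.

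The key step, and essentially the only work, is to show that $S_i\subseteq S_j$ for all $i,j\in I$; the reverse inclusion then follows by interchanging $i$ and $j$. Here I would exploit the full invariance of $V_{\Theta }\left( X\right) $. For fixed $i\neq j$, let $\psi _{ij}\in \mathrm{End}\left( F\left( X\right) \right) $ be the endomorphism determined, by freeness of $F\left( X\right) $, by $\left( \psi _{ij}\right) _{\mid X^{\left( 1\right) }}=id_{X^{\left( 1\right) }}$, $\psi _{ij}\left( x_i^{\left( 2\right) }\right) =x_j^{\left( 2\right) }$, and $\psi _{ij}\left( x_k^{\left( 2\right) }\right) =x_k^{\left( 2\right) }$ for every $k\in I\smallsetminus \left\{ i\right\} $, in exact analogy with the endomorphisms $\chi _i$ and $\chi _f$ used above. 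Take $s\in S_i$, so that $sx_i^{\left( 2\right) }\in V_{\Theta }\left( X\right) $. The full invariance of $V_{\Theta }\left( X\right) $ gives $\psi _{ij}\left( sx_i^{\left( 2\right) }\right) \in V_{\Theta }\left( X\right) $, and the point is that this image equals $sx_j^{\left( 2\right) }$; consequently $s\in S_j$, as required.

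The one point to verify with care is the identity $\psi _{ij}\left( sx_i^{\left( 2\right) }\right) =sx_j^{\left( 2\right) }$, which is where the module structure must be unwound. For a monomial $s=x_{i_n}^{\left( 1\right) }\cdots x_{i_1}^{\left( 1\right) }$ one has $sx_i^{\left( 2\right) }=x_{i_n}^{\left( 1\right) }\circ \left( \cdots \circ \left( x_{i_1}^{\left( 1\right) }\circ x_i^{\left( 2\right) }\right) \right) $; since $\psi _{ij}$ is a homomorphism of representations that fixes the first sort pointwise and respects the action $\circ $, it leaves each factor $x_{i_t}^{\left( 1\right) }$ fixed and moves only the terminal generator $x_i^{\left( 2\right) }$ to $x_j^{\left( 2\right) }$, yielding $sx_j^{\left( 2\right) }$; extending by $k$-linearity over all of $A\left( X^{\left( 1\right) }\right) $ gives the claim for arbitrary $s$. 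Beyond this verification no further computation is needed, and the boundary cases $X^{\left( 2\right) }=\varnothing $ or $\left\vert X^{\left( 2\right) }\right\vert =1$ are trivial, since then the equality of the $S_i$ holds vacuously or there is nothing to prove.
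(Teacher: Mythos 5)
Your proposal is correct and follows essentially the same route as the paper: both reduce the statement to showing $S_{i}=S_{j}$ for all $i,j\in I$ and establish this via a fully invariant endomorphism fixing $X^{\left( 1\right) }$ pointwise and sending $x_{i}^{\left( 2\right) }$ to $x_{j}^{\left( 2\right) }$ (the paper sends the remaining generators $x_{k}^{\left( 2\right) }$ to $0^{\left( 2\right) }$ rather than fixing them, an immaterial difference). Your explicit unwinding of $\psi _{ij}\left( sx_{i}^{\left( 2\right) }\right) =sx_{j}^{\left( 2\right) }$ and the assembly of the final decomposition are details the paper leaves implicit, but the argument is the same.
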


\begin{proof}
We only need to prove that $S_{i}=S_{j}$ for every $i,j\in I$. We consider $%
s\in S_{i}$. $sx_{i}^{\left( 2\right) }\in U_{i}\cap V_{\Theta }\left(
X\right) $. There exists $\chi _{j}\in \mathrm{End}\left( F\left( X\right)
\right) $ such that $\left( \chi _{j}\right) _{\mid X^{\left( 1\right)
}}=id_{X^{\left( 1\right) }}$, $\chi _{j}\left( x_{i}^{\left( 2\right)
}\right) =x_{j}^{\left( 2\right) }$, $\chi _{j}\left( x_{k}^{\left( 2\right)
}\right) =0^{\left( 2\right) }$ for every $k\in I\smallsetminus \left\{
i\right\} $. $\chi _{j}\left( sx_{i}^{\left( 2\right) }\right)
=sx_{j}^{\left( 2\right) }\in U_{j}\cap V_{\Theta }\left( X\right) $.
Therefore $s\in S_{j}$.
\end{proof}

Therefore we prove the following:

\begin{theorem}
\label{homogident}$\left( F_{\Theta }\left( X\right) \right) ^{\left(
1\right) }=L_{\Theta }\left( X^{\left( 1\right) }\right) =$ $L\left(
X^{\left( 1\right) }\right) /I_{\Theta }\left( X^{\left( 1\right) }\right) $%
, $\left( F_{\Theta }\left( X\right) \right) ^{\left( 2\right)
}=\bigoplus\limits_{x\in X^{\left( 2\right) }}\left( A_{\Theta }\left(
X^{\left( 1\right) }\right) x\right) $, where $A_{\Theta }\left( X^{\left(
1\right) }\right) =A\left( X^{\left( 1\right) }\right) /S_{\Theta }\left(
X^{\left( 1\right) }\right) $, $L\left( X^{\left( 1\right) }\right) $ is a
free Lie algebra, generated by the set $X^{\left( 1\right) }$, $I_{\Theta
}\left( X^{\left( 1\right) }\right) $ is a polyhomogeneous ideal of this
algebra, $A\left( X^{\left( 1\right) }\right) $ is a free associative
algebra with unit, generated by the set $X^{\left( 1\right) }$, $S_{\Theta
}\left( X^{\left( 1\right) }\right) $ is a polyhomogeneous two-sided ideal
of this algebra, $A_{\Theta }\left( X^{\left( 1\right) }\right) =A\left(
X^{\left( 1\right) }\right) /S_{\Theta }\left( X^{\left( 1\right) }\right) $
is a graded algebra. Also for every $a\in A\left( X^{\left( 1\right)
}\right) $ and every $l\in I_{\Theta }\left( X^{\left( 1\right) }\right) $
is fulfilled $al,la\in S_{\Theta }\left( X^{\left( 1\right) }\right) $ ($%
I_{\Theta }\left( X^{\left( 1\right) }\right) \subseteq \left( S_{\Theta
}\left( X^{\left( 1\right) }\right) :A\left( X^{\left( 1\right) }\right)
\right) $).
\end{theorem}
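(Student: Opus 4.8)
The plan is to observe that almost every assertion of the theorem has already been established in the three preceding Propositions together with the cited homogenization theorem for linear algebras, so that the only genuinely new claim to verify is the final inclusion $I_\Theta(X^{(1)}) \subseteq (S_\Theta(X^{(1)}) : A(X^{(1)}))$, i.e. that $al, la \in S_\Theta(X^{(1)})$ for all $a \in A(X^{(1)})$ and all $l \in I_\Theta(X^{(1)})$.

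First I would assemble the structural statements. The description of $(F_\Theta(X))^{(1)}$ is immediate from the definition $L_\Theta(X^{(1)}) = L(X^{(1)})/I_\Theta(X^{(1)})$, with $I_\Theta(X^{(1)})$ polyhomogeneous by the homogenization theorem. The description of $(F_\Theta(X))^{(2)}$ as $\bigoplus_{x \in X^{(2)}}(A_\Theta(X^{(1)})x)$ follows by combining the first Proposition (the cyclic components of any $v \in V_\Theta(X)$ again lie in $V_\Theta(X)$, whence $V_\Theta(X) = \bigoplus_i(U_i \cap V_\Theta(X))$) with the third Proposition (all the ideals $S_i$ coincide with a single two-sided polyhomogeneous ideal $S_\Theta(X^{(1)})$). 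Finally, the gradedness of $A_\Theta(X^{(1)}) = A(X^{(1)})/S_\Theta(X^{(1)})$ is exactly the polyhomogeneity of $S_\Theta(X^{(1)})$ recorded in the second and third Propositions.

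For the new inclusion I would reduce $al, la \in S_\Theta(X^{(1)})$ to the single statement $\iota(l) \in S_\Theta(X^{(1)})$: since $S_\Theta(X^{(1)})$ is a two-sided ideal, once $\iota(l) \in S_\Theta(X^{(1)})$ is known, both $a\iota(l)$ and $\iota(l)a$ automatically lie in $S_\Theta(X^{(1)})$. To see $\iota(l) \in S_\Theta(X^{(1)})$ I would use that $\mathrm{Id}_\Theta(X)$ is a congruence and hence respects the action operation $\circ$ of type $(1,2;2)$: because $(l, 0^{(1)}) \in \mathrm{Id}_\Theta(X)$ for $l \in I_\Theta(X^{(1)})$ and $0^{(1)} \circ v = 0^{(2)}$ in the absolutely free representation, we obtain $l \circ v \in V_\Theta(X)$ for every $v \in (F(X))^{(2)}$. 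Specializing $v = x_i^{(2)}$ gives $\iota(l)x_i^{(2)} = l \circ x_i^{(2)} \in U_i \cap V_\Theta(X) = S_\Theta(X^{(1)})x_i^{(2)}$, and freeness of the cyclic module $U_i$ then yields $\iota(l) \in S_\Theta(X^{(1)})$.

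The argument is essentially bookkeeping once the three Propositions are in place; the only point demanding care is the identification of $l \in I_\Theta(X^{(1)}) \subseteq L(X^{(1)})$ with its image $\iota(l) \in A(X^{(1)})$ and the verification that the action $\circ$ descends to the quotient, which is precisely what the congruence property of $\mathrm{Id}_\Theta(X)$ guarantees. I do not expect a serious obstacle here, since the real substance of the section is the polyhomogeneity already carried out in the Propositions.
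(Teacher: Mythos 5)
Your proposal is correct and takes essentially the same route as the paper: the theorem is stated there as a direct compilation of the three preceding propositions, with the final inclusion resting on the observation (made in the setup, via the congruence property of $\mathrm{Id}_{\Theta}\left( X\right) $) that $l\circ v\in V_{\Theta }\left( X\right) $ for every $l\in I_{\Theta }\left( X^{\left( 1\right) }\right) $. Your derivation of $\iota \left( l\right) \in S_{\Theta }\left( X^{\left( 1\right) }\right) $ from this, and then of $al,la\in S_{\Theta }\left( X^{\left( 1\right) }\right) $ via two-sidedness, merely spells out the one step the paper leaves implicit.
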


\section{Category of the finitely generated free representations\label%
{Category}}

\setcounter{equation}{0}

We consider the category $\Theta ^{0}$, where $\Theta $ is an arbitrary
subvariety of the variety of all the representations.

\begin{definition}
\label{IBN}We say that the variety $\Theta $ has an \textbf{IBN propriety}
if for every $F_{\Theta }\left( X\right) ,F_{\Theta }\left( Y\right) \in 
\mathrm{Ob}\Theta ^{0}$ the $F_{\Theta }\left( X\right) \cong F_{\Theta
}\left( Y\right) $ holds if and only if $\left\vert X^{\left( i\right)
}\right\vert =\left\vert Y^{\left( i\right) }\right\vert $, $i=1,2$.
\end{definition}

We consider the nontrivial variety $\Theta $. It means that in the variety $%
\Theta $ the identity $x^{\left( 2\right) }=0$ is not fulfilled.

\begin{proposition}
Every nontrivial variety $\Theta $ has IBN propriety.
\end{proposition}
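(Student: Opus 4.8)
The direction $\Leftarrow$ is immediate: a bijection $X\to Y$ respecting sorts (sending $X^{(i)}$ to $Y^{(i)}$) extends, by the universal property of free representations, to an isomorphism $F_{\Theta}(X)\to F_{\Theta}(Y)$. So the whole content lies in the direction $\Rightarrow$, and the plan is to exhibit the two cardinalities $|X^{(1)}|$ and $|X^{(2)}|$ as invariants of the abstract two-sorted algebra $F_{\Theta}(X)$, i.e.\ as quantities preserved by every isomorphism. Throughout I would use the explicit description from Theorem \ref{homogident}: $\left(F_{\Theta}(X)\right)^{(1)}=L_{\Theta}(X^{(1)})$ is graded with polyhomogeneous defining ideal $I_{\Theta}(X^{(1)})$, while $\left(F_{\Theta}(X)\right)^{(2)}=\bigoplus_{x\in X^{(2)}}A_{\Theta}(X^{(1)})x$ is free of rank $|X^{(2)}|$ over the graded algebra $A_{\Theta}(X^{(1)})$, whose degree-$0$ component is $k$.

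To recover $|X^{(2)}|$, I would introduce the intrinsic subspace $W(F)=\mathrm{Sp}_{k}\{\,l\circ v\mid l\in F^{(1)},\,v\in F^{(2)}\,\}\subseteq F^{(2)}$, which is manifestly carried by any isomorphism $F_{\Theta}(X)\to F_{\Theta}(Y)$ onto the corresponding subspace of the target. Writing $A_{\Theta}^{+}(X^{(1)})$ for the augmentation ideal (generated by the images of $X^{(1)}$), and using that $\iota(L_{\Theta}(X^{(1)}))$ contains those images, a short computation gives $W(F_{\Theta}(X))=A_{\Theta}^{+}(X^{(1)})X^{(2)}$, whence $\left(F_{\Theta}(X)\right)^{(2)}/W(F_{\Theta}(X))\cong\bigl(A_{\Theta}(X^{(1)})/A_{\Theta}^{+}(X^{(1)})\bigr)X^{(2)}\cong k^{|X^{(2)}|}$. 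Here \emph{nontriviality} enters decisively: since $x^{(2)}=0$ is not an identity, $A_{\Theta}(X^{(1)})\neq 0$, so its degree-$0$ part is exactly $k$ and each generator $x\in X^{(2)}$ contributes an independent line. Thus $\dim_{k}\bigl(F^{(2)}/W(F)\bigr)=|X^{(2)}|$ is an isomorphism invariant, and $|X^{(2)}|=|Y^{(2)}|$.

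To recover $|X^{(1)}|$, I would pass to the Lie-algebra abelianization $L_{\Theta}(X^{(1)})^{\mathrm{ab}}=L_{\Theta}(X^{(1)})/[L_{\Theta}(X^{(1)}),L_{\Theta}(X^{(1)})]$, which is an invariant of $\left(F_{\Theta}(X)\right)^{(1)}$ as a Lie algebra, hence of $F_{\Theta}(X)$. Since the free Lie algebra $L(X^{(1)})$ is generated in degree $1$, its derived algebra is precisely the part of degree $\geq 2$; combined with the polyhomogeneity of $I_{\Theta}(X^{(1)})$ from Theorem \ref{homogident}, this kills everything of degree $\geq 2$ and yields $L_{\Theta}(X^{(1)})^{\mathrm{ab}}\cong\mathrm{Sp}_{k}(X^{(1)})/\bigl(I_{\Theta}(X^{(1)})\bigr)_{1}$, where the subscript denotes the degree-$1$ component. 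A full-invariance argument — the same substitution endomorphisms used in the propositions above — shows $\bigl(I_{\Theta}(X^{(1)})\bigr)_{1}=0$, since a nonzero degree-$1$ relation would force every Lie generator into $I_{\Theta}(X^{(1)})$ and so collapse the representation. Hence $\dim_{k}L_{\Theta}(X^{(1)})^{\mathrm{ab}}=|X^{(1)}|$ is an isomorphism invariant, giving $|X^{(1)}|=|Y^{(1)}|$.

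The two dimension computations are routine; the real obstacle is conceptual. An abstract isomorphism $F_{\Theta}(X)\to F_{\Theta}(Y)$ need not respect the gradings, so one cannot simply compare homogeneous components. The whole point of the argument is to replace the graded data by the two grading-free, intrinsically defined quotients $F^{(2)}/W(F)$ and $L_{\Theta}(X^{(1)})^{\mathrm{ab}}$, whose dimensions nevertheless compute $|X^{(2)}|$ and $|X^{(1)}|$. Verifying that these dimensions are finite and equal to the stated cardinalities — and checking that nontriviality is exactly what prevents the degenerations ($A_{\Theta}(X^{(1)})=0$ for the second sort, $\bigl(I_{\Theta}(X^{(1)})\bigr)_{1}\neq 0$ for the first) that would break the count — is where the care is needed.
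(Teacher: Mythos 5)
Your two invariants are exactly the ones the paper uses: $\dim _{k}\bigl( F^{\left( 2\right) }/(F^{\left( 1\right) }\circ F^{\left( 2\right) })\bigr) =\left\vert X^{\left( 2\right) }\right\vert $ for the second sort (your $W(F)$ is literally the paper's $F_{\Theta }^{\left( 1\right) }\circ F_{\Theta }^{\left( 2\right) }$, and your identification of it with the augmentation ideal applied to $X^{\left( 2\right) }$, using nontriviality to ensure $A_{\Theta }\neq 0$, is the paper's computation $A_{\Theta }/\left( L_{\Theta }\cdot A_{\Theta }\right) \cong A/J\cong k$), and $\dim _{k}\bigl( L_{\Theta }/\left[ L_{\Theta },L_{\Theta }\right] \bigr) =\left\vert X^{\left( 1\right) }\right\vert $ for the first sort. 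For the latter the paper simply cites \cite[Section 3]{TsurkovClassicalVar}, whereas you supply an argument; that argument contains the one real problem.

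Your justification of $\bigl( I_{\Theta }(X^{\left( 1\right) })\bigr) _{1}=0$ does not follow from the stated hypothesis. The substitution argument correctly shows that a nonzero degree-$1$ element of the fully invariant polyhomogeneous ideal $I_{\Theta }(X^{\left( 1\right) })$ forces $X^{\left( 1\right) }\subseteq I_{\Theta }(X^{\left( 1\right) })$ and hence $L_{\Theta }(X^{\left( 1\right) })=0$. But that does not ``collapse the representation'': nontriviality only says that $x^{\left( 2\right) }=0$ is not an identity, and the subvariety defined by the identity $x^{\left( 1\right) }=0$ (zero Lie algebra acting on an arbitrary linear space) is nontrivial in exactly this sense while having $L_{\Theta }(X^{\left( 1\right) })=0$ for every $X^{\left( 1\right) }$, so that $\dim _{k}L_{\Theta }(X^{\left( 1\right) })^{\mathrm{ab}}$ no longer detects $\left\vert X^{\left( 1\right) }\right\vert $ (indeed $F_{\Theta }(x^{\left( 1\right) })\cong F_{\Theta }(\varnothing )$ there). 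So your deduction needs an additional hypothesis --- e.g.\ that $x^{\left( 1\right) }=0$ is not an identity of $\Theta $, or the action-type assumption $I_{\Theta }(X^{\left( 1\right) })=\left\{ 0\right\} $ under which the proposition is actually applied later in the paper, either of which makes your argument go through verbatim. To be fair, the paper's own proof buries this same point inside the citation; but as written your sentence asserts an implication that nontriviality alone does not give.
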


\begin{proof}
We consider $F_{\Theta }\left( X\right) =F_{\Theta }\in \mathrm{Ob}\Theta
^{0}$. We will denote $A\left( X^{\left( 1\right) }\right) =A$, $F_{\Theta
}^{\left( 1\right) }=L_{\Theta }\left( X^{\left( 1\right) }\right)
=L_{\Theta }$, $A_{\Theta }\left( X^{\left( 1\right) }\right) =A_{\Theta }$, 
$S_{\Theta }\left( X^{\left( 1\right) }\right) =S_{\Theta }$. We denote by $%
J $ the two-sided ideal of $A$ generated by set $X^{\left( 1\right) }$: $%
J=\left\langle X^{\left( 1\right) }\right\rangle _{idealA}$.

By \cite[Section 3]{TsurkovClassicalVar} we have%
\[
\left\vert X^{\left( 1\right) }\right\vert =\dim _{k}\left( L_{\Theta }/ 
\left[ L_{\Theta },L_{\Theta }\right] \right) . 
\]

We will use the description of $F_{\Theta }\left( X\right) $ given in
Theorem \ref{homogident}. $\left( F_{\Theta }^{\left( 1\right) }\right)
\circ \left( F_{\Theta }^{\left( 2\right) }\right) =L_{\Theta }\circ \left(
\bigoplus\limits_{x\in X^{\left( 2\right) }}A_{\Theta }x\right)
=\bigoplus\limits_{x\in X^{\left( 2\right) }}\left( L_{\Theta }\cdot
A_{\Theta }\right) x$. $L_{\Theta }\cdot A_{\Theta }$ is a two-sided ideal
of $A_{\Theta }$, because $X^{\left( 1\right) }\subset L_{\Theta }$. $\Theta 
$ is the nontrivial variety, so $J\supseteq S_{\Theta }$ and $L_{\Theta
}\cdot A_{\Theta }=J/S_{\Theta }$. $A_{\Theta }/\left( L_{\Theta }\cdot
A_{\Theta }\right) =\left( A/S_{\Theta }\right) /\left( J/S_{\Theta }\right)
\cong A/J\cong k$.%
\[
F_{\Theta }^{\left( 2\right) }/\left( F_{\Theta }^{\left( 1\right) }\circ
F_{\Theta }^{\left( 2\right) }\right) =\left( \bigoplus\limits_{x\in
X^{\left( 2\right) }}A_{\Theta }x\right) /\left( \bigoplus\limits_{x\in
X^{\left( 2\right) }}\left( L_{\Theta }\cdot A_{\Theta }\right) x\right)
\cong 
\]%
\[
\bigoplus\limits_{x\in X^{\left( 2\right) }}\left( A_{\Theta }/\left(
L_{\Theta }\cdot A_{\Theta }\right) \right) x, 
\]%
hence%
\[
\dim _{k}\left( F_{\Theta }^{\left( 2\right) }/\left( F_{\Theta }^{\left(
1\right) }\circ F_{\Theta }^{\left( 2\right) }\right) \right) =\left\vert
X^{\left( 2\right) }\right\vert \text{.} 
\]
\end{proof}

We say that the variety $\Theta $ is an action-type variety, if $I_{\Theta
}\left( X^{\left( 1\right) }\right) =\left\{ 0\right\} $ for every $%
X^{\left( 1\right) }\subset X_{0}^{\left( 1\right) }$, such that $\left\vert
X^{\left( 1\right) }\right\vert <\infty $.

\begin{proposition}
\label{propMonoiso}If $\Theta $ is an action-type nontrivial variety, then $%
\Phi \left( F_{\Theta }\left( x^{\left( 1\right) }\right) \right) =F_{\Theta
}\left( x^{\left( 1\right) }\right) $ and $\Phi \left( F_{\Theta }\left(
x^{\left( 2\right) }\right) \right) =F_{\Theta }\left( x^{\left( 2\right)
}\right) $ hold for every $\Phi \in \mathrm{Aut}\Theta ^{0}$.
\end{proposition}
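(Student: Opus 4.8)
The plan is to characterize $F_{\Theta }\left( x^{(1)}\right) $ and $F_{\Theta }\left( x^{(2)}\right) $ by categorical properties that every $\Phi \in \mathrm{Aut}\,\Theta ^{0}$ must preserve; since an abstract category automorphism can only be controlled up to isomorphism, the asserted equalities are read as preservation of isomorphism types. Write $A_{1}=F_{\Theta }\left( x^{(1)}\right) $ and $A_{2}=F_{\Theta }\left( x^{(2)}\right) $. First I would record that coproducts in $\Theta ^{0}$ are disjoint unions of generators, $F_{\Theta }(X)\amalg F_{\Theta }(Y)\cong F_{\Theta }(X\cup Y)$, so every object is a finite coproduct of copies of $A_{1}$ and $A_{2}$, and the rank $\left( \left\vert X^{(1)}\right\vert ,\left\vert X^{(2)}\right\vert \right) $ is additive under coproduct. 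Since $\Theta $ is nontrivial it has the IBN property (Definition \ref{IBN}), so rank gives a bijection between the isomorphism classes of $\mathrm{Ob}\,\Theta ^{0}$ and $\mathbb{N}^{2}$ turning coproduct into addition; thus the isomorphism classes form the free commutative monoid $\left( \mathbb{N}^{2},+\right) $ whose two free generators are the classes of $A_{1}$ and $A_{2}$.

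Being invertible, $\Phi $ is an equivalence of categories, hence preserves coproducts (and the initial object $F_{\Theta }(\varnothing )$); therefore it induces an automorphism of the monoid $\left( \mathbb{N}^{2},+\right) $. Every automorphism of $\mathbb{N}^{2}$ permutes its irreducible elements $\left( 1,0\right) $ and $\left( 0,1\right) $, so $\Phi $ either fixes the classes of $A_{1}$ and $A_{2}$ or interchanges them, and the whole content of the proposition is to exclude the interchange.

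To do this I would compare the endomorphism monoids of the coproduct squares $P=A_{1}\amalg A_{1}\cong F_{\Theta }\left( x_{1}^{(1)},x_{2}^{(1)}\right) $ and $Q=A_{2}\amalg A_{2}\cong F_{\Theta }\left( x_{1}^{(2)},x_{2}^{(2)}\right) $: a swap would give $\Phi (P)\cong Q$ and hence a monoid isomorphism $\mathrm{End}(P)\cong \mathrm{End}(Q)$. By Theorem \ref{homogident} and the action-type hypothesis $I_{\Theta }=0$, the object $P$ has zero sort-$2$ part and $P^{(1)}=L\left( x_{1},x_{2}\right) $ is the genuinely free Lie algebra of rank two, so $\mathrm{End}(P)\cong \mathrm{End}_{\mathrm{Lie}}\left( L\left( x_{1},x_{2}\right) \right) $ under composition; while $Q$ has zero sort-$1$ part and $Q^{(2)}\cong k^{2}$ carries the trivial action, so $\mathrm{End}(Q)\cong \left( M_{2}(k),\cdot \right) $. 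These monoids are not isomorphic. Over a field a square matrix is injective iff surjective iff invertible, so in $M_{2}(k)$ every one-sided cancellable element is a unit. In $\mathrm{End}_{\mathrm{Lie}}\left( L\left( x_{1},x_{2}\right) \right) $, however, the endomorphism $x_{1}\mapsto x_{1}$, $x_{2}\mapsto \left[ x_{1},x_{2}\right] $ is injective: it maps $L\left( x_{1},x_{2}\right) $ isomorphically onto the subalgebra generated by $x_{1}$ and $\left[ x_{1},x_{2}\right] $, which is free of rank two by the Shirshov--Witt theorem (being infinite dimensional, it is not $1$-generated). This endomorphism is not surjective (its degree-one part is only $kx_{1}$, so $x_{2}$ is not in its image), hence it is a left-cancellable non-unit. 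Therefore $\mathrm{End}(P)\not\cong \mathrm{End}(Q)$, the interchange is impossible, and $\Phi $ preserves the isomorphism types of $A_{1}$ and $A_{2}$.

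The main obstacle is exactly this final separation of monoids. The two single-generator objects are in every cheap respect symmetric --- their endomorphism monoids are both isomorphic to $\left( k,\cdot \right) $, and each is orthogonal to the coproduct powers of the other --- so the asymmetry between the two sorts becomes visible only after passing to coproduct squares, where the nonabelian, infinite-dimensional free Lie algebra of rank two must be told apart from the finite-dimensional matrix monoid. The cancellability invariant above achieves this; alternatively one may note that $M_{2}(k)$ has only finitely many two-sided ideals, whereas $\mathrm{End}_{\mathrm{Lie}}\left( L\left( x_{1},x_{2}\right) \right) $ has infinitely many (the endomorphisms whose image lies in the $d$-th term of the lower central series, $d=1,2,\dots $).
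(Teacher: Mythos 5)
Your proof is correct and follows the same two-step skeleton as the paper's --- use the IBN property to reduce to the dichotomy ``$\Phi $ fixes the two one-generator objects or swaps them,'' then rule out the swap by exploiting the asymmetry between the purely sort-$1$ and purely sort-$2$ free objects --- but where the paper delegates both steps to citations (\cite[Proposition 5.2]{ShestTsur} and \cite[Section 5]{TsurkovManySorted} for the dichotomy, the ``argument of \cite[Proposition 5.9]{TsurkovManySorted}'' for the exclusion), you supply explicit arguments. Your derivation of the dichotomy from the monoid $(\mathbb{N}^{2},+)$ of isomorphism classes under coproduct is a clean rendering of the cited facts, and your remark that the one-generator objects themselves cannot be separated (both endomorphism monoids are $(k,\cdot )$, and the hom-sets between them are singletons) correctly identifies why one must pass to two generators; the comparison $\mathrm{End}(P)\cong \mathrm{End}_{\mathrm{Lie}}(L(x_{1},x_{2}))$ versus $\mathrm{End}(Q)\cong M_{2}(k)$ then does the job. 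One small caveat: the injectivity of $x_{1}\mapsto x_{1}$, $x_{2}\mapsto [x_{1},x_{2}]$ does not follow from Shirshov--Witt alone --- that theorem plus your dimension count only yields a surjection of $L(x_{1},x_{2})$ onto a free Lie algebra of rank two, and you still need the (true and standard) Hopfian property of finitely generated free Lie algebras, or a direct leading-monomial argument in the enveloping algebra $k\langle x_{1},x_{2}\rangle $, to conclude that this surjection is an isomorphism. Your alternative invariant --- $M_{2}(k)$ has only finitely many two-sided monoid ideals, while $\mathrm{End}_{\mathrm{Lie}}(L(x_{1},x_{2}))$ contains the strictly descending chain $I_{d}=\{\phi \mid \mathrm{im}\,\phi \subseteq \gamma _{d}(L)\}$ --- sidesteps this point entirely and is the cleaner way to finish.
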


\begin{proof}
$\Theta $ has IBN propriety, so, by \cite[Proposition 5.2]{ShestTsur}\ and 
\cite[Section 5]{TsurkovManySorted}, we have two possibilities for every $%
\Phi \in \mathrm{Aut}\Theta ^{0}$: or $\Phi \left( F_{\Theta }\left(
x^{\left( 1\right) }\right) \right) =F_{\Theta }\left( x^{\left( 1\right)
}\right) $ and $\Phi \left( F_{\Theta }\left( x^{\left( 2\right) }\right)
\right) =F_{\Theta }\left( x^{\left( 2\right) }\right) $, or $\Phi \left(
F_{\Theta }\left( x^{\left( 1\right) }\right) \right) =F_{\Theta }\left(
x^{\left( 2\right) }\right) $ and $\Phi \left( F_{\Theta }\left( x^{\left(
2\right) }\right) \right) =F_{\Theta }\left( x^{\left( 1\right) }\right) $. $%
\left( F_{\Theta }\left( x_{1}^{\left( 1\right) },\ldots ,x_{n}^{\left(
1\right) }\right) \right) ^{\left( 1\right) }=L\left( x_{1}^{\left( 1\right)
},\ldots ,x_{n}^{\left( 1\right) }\right) $, $\left( F_{\Theta }\left(
x_{1}^{\left( 1\right) },\ldots ,x_{n}^{\left( 1\right) }\right) \right)
^{\left( 2\right) }=\left\{ 0\right\} $; $\left( F_{\Theta }\left(
x_{1}^{\left( 2\right) },\ldots ,x_{n}^{\left( 2\right) }\right) \right)
^{\left( 1\right) }=\left\{ 0\right\} $, $\left( F_{\Theta }\left(
x_{1}^{\left( 2\right) },\ldots ,x_{n}^{\left( 2\right) }\right) \right)
^{\left( 2\right) }=\mathrm{Sp}_{k}\left( x_{1}^{\left( 2\right) },\ldots
,x_{n}^{\left( 2\right) }\right) $. Now we can use the argument of \cite[%
Proposition 5.9]{TsurkovManySorted} and conclude that there does not exist $%
\Phi \in \mathrm{Aut}\Theta ^{0}$ such that $\Phi \left( F_{\Theta }\left(
x^{\left( 1\right) }\right) \right) =F_{\Theta }\left( x^{\left( 2\right)
}\right) $ and $\Phi \left( F_{\Theta }\left( x^{\left( 2\right) }\right)
\right) =F_{\Theta }\left( x^{\left( 1\right) }\right) $.
\end{proof}

\section{Method of the verbal operations}

\setcounter{equation}{0}

In the beginning of this section we will explain the method of \cite%
{PlotkinZhitom} and \cite{TsurkovManySorted} in the case of arbitrary
variety $\Theta $ of universal algebras of the signature $\Omega $.

In \cite{PlotkinZhitom} the notion of the strongly stable automorphism of
the category $\Theta ^{0}$ was defined. In the case of the variety of
many-sorted algebras ($\left\vert \Gamma \right\vert >1$) we have the
following:

\begin{definition}
\label{str_stab_aut}\cite{TsurkovManySorted}\textit{An automorphism $\Phi $
of the category }$\Theta ^{0}$\textit{\ is called \textbf{strongly stable}
if it satisfies the conditions:}

\begin{enumerate}
\item[1] $\Phi $\textit{\ preserves all objects of }$\Theta ^{0}$\textit{,}

\item[2] \textit{there exists a system of bijections }$S=\left\{
s_{F}:F\rightarrow F\mid F\in \mathrm{Ob}\Theta ^{0}\right\} $\textit{\ such
that all these bijections }conform with the sorting:%
\[
\eta _{F}=\eta _{F}s_{F} 
\]

\item[3] $\Phi $\textit{\ acts on the morphisms }$\mu \in \mathrm{Mor}%
_{\Theta ^{0}}\left( F_{1},F_{2}\right) $\textit{\ of }$\Theta ^{0}$\textit{%
\ thusly: }%
\[
\Phi \left( \mu \right) =s_{F_{2}}\mu s_{F_{1}}^{-1}, 
\]

\item[4] $s_{F}\mid _{X}=id_{X},$ \textit{\ for every }$F\left( X\right) \in 
\mathrm{Ob}\Theta ^{0}$.
\end{enumerate}
\end{definition}

It is clear that the set $\mathfrak{S}$ of all strongly stable automorphisms
of the category $\Theta ^{0}$ is a subgroup of the group $\mathfrak{A}$ of
all automorphisms of this category. By \cite[Theorem 2.3]{TsurkovManySorted}%
, $\mathfrak{A=YS}$ holds if in the category $\Theta ^{0}$ the

\begin{condition}
\label{monoiso}$\Phi \left( F\left( x^{\left( i\right) }\right) \right)
\cong F\left( x^{\left( i\right) }\right) $ for every automorphism $\Phi $
of the category $\Theta ^{0}$, every sort $i\in \Gamma $ and every $%
x^{\left( i\right) }\in X_{0}^{\left( i\right) }\subset X_{0}$
\end{condition}

holds. In this case we have that $\mathfrak{A/Y\cong S/S\cap Y}$. So we must
compute the groups $\mathfrak{S}$ and $\mathfrak{S\cap Y}$.

The group $\mathfrak{S}$ we can compute by the method of verbal operations.
For every word $w=w\left( x_{1},\ldots ,x_{n}\right) \in F\left(
x_{1},\ldots ,x_{n}\right) =F\in \mathrm{Ob}\Theta ^{0}$ and every algebra $%
H\in \Theta $ we can define an operation $w_{H}^{\ast }$ in $H$: if $%
h_{1},\ldots ,h_{n}\in H$ such that $\eta _{H}\left( h_{i}\right) =\eta
_{F}\left( x_{i}\right) $, where $1\leq i\leq n$, then $w_{H}^{\ast }\left(
h_{1},\ldots ,h_{n}\right) =\alpha \left( w\right) $, where $\alpha
:F\rightarrow H$ is a homomorphism such that $\alpha \left( x_{i}\right)
=h_{i}$. If $\eta _{F}\left\{ x_{1},\ldots ,x_{n}\right\} \nsubseteq \Gamma
_{H}$ then the operation $w_{H}^{\ast }$ is defined on the empty subset of $%
H^{n}$. The operation $w_{H}^{\ast }$ is called the verbal operation defined
by the word $w$. This operation we consider as the operation of the type $%
\left( \eta _{F}\left( x_{1}\right) ,\ldots ,\eta _{F}\left( x_{n}\right)
;\eta _{F}\left( w\right) \right) $ even if not all of the free generators $%
x_{1},\ldots ,x_{n}$ really enter into the word $w$.

If we have a system of words $W=\left\{ w_{i}\mid i\in I\right\} $ such that 
$w_{i}\in F_{i}\in \mathrm{Ob}\Theta ^{0}$, then for every $H\in \Theta $ we
denote by $H_{W}^{\ast }$ the universal algebra which coincides with $H$ as
a set with "sorting", but has only verbal operations defined by the words
from $W$.

For the operation $\omega \in \Omega $ which has a type $\tau _{\omega
}=\left( i_{1},\ldots ,i_{n};j\right) $, we take $F_{\omega }=F\left(
X_{\omega }\right) \in \mathrm{Ob}\Theta ^{0}$ such that $X_{\omega
}=\left\{ x^{\left( i_{1}\right) },\ldots ,x^{\left( i_{n}\right) }\right\} $%
, $\eta _{A_{\omega }}\left( x^{\left( i_{k}\right) }\right) =i_{k}$, $1\leq
k\leq n$. By the method of the verbal operations (see \cite{PlotkinZhitom}, 
\cite{TsurAutomEqAlg} and \cite{TsurkovManySorted}) there is a bijection
between the set of the strongly stable automorphisms of the category $\Theta
^{0}$ and the set of the systems of words $W$ which fulfill the

\begin{condition}
\label{operationconditions}

\begin{enumerate}
\item $W=\left\{ w_{\omega }\in F_{\omega }\mid \omega \in \Omega \right\} $,

\item for every $F\left( X\right) \in \mathrm{Ob}\Theta ^{0}$ there exists a
bijection $s_{F}:F\rightarrow F$ such that $\left( s_{F}\right) _{\mid
X}=id_{X}$ and $s_{F}:F\rightarrow F_{W}^{\ast }$ is an isomorphism.
\end{enumerate}
\end{condition}

We can compute the group $\mathfrak{S\cap Y}$ by this

\begin{criterion}
\label{intersectioncriterion}\cite[Proposition 3.7]{TsurkovManySorted}The
strongly stable automorphism $\Phi $ which corresponds to the system of
words $W^{\Phi }=W$ is inner if and only if there is a system of
isomorphisms $\left\{ \tau _{F}:F\rightarrow F_{W}^{\ast }\mid F\in \mathrm{%
Ob}\Theta ^{0}\right\} $ such that for every $F_{1},F_{2}\in \mathrm{Ob}%
\Theta ^{0}$ and every $\mu \in \mathrm{Mor}_{\Theta ^{0}}\left(
F_{1},F_{2}\right) $ the 
\[
\tau _{F_{2}}\mu =\mu \tau _{F_{1}} 
\]%
holds.
\end{criterion}

Now we came back to the varieties of the representations. By Proposition \ref%
{propMonoiso} we have that in an action-type nontrivial variety of the
representations, Condition \ref{monoiso} is fulfilled. The signature $\Omega 
$ of the representations was described in (\ref{repAssin}). For every $%
\omega \in \Omega $ we must find all the possible forms of the words $%
w_{\omega }$ such that the system $W=\left\{ w_{\omega }\mid \omega \in
\Omega \right\} $ fulfills Condition \ref{operationconditions}.

We say that the variety of the representations is degenerated if the
identity $\left[ x_{1}^{\left( 1\right) },x_{2}^{\left( 1\right) }\right]
\circ x^{\left( 2\right) }=0$ is fulfilled in this variety. It is clear that
the nondegenerated variety is nontrivial.

\begin{proposition}
If $\Theta $ is an action-type and nondegenerated variety of
representations, then the system $W=\left\{ w_{\omega }\mid \omega \in
\Omega \right\} $, which fulfills Condition \ref{operationconditions} has a
form%
\[
w_{0^{\left( 1\right) }}=0^{\left( 1\right) },w_{-^{\left( 1\right)
}}=-x^{\left( 1\right) },w_{\lambda ^{\left( 1\right) }}=\varphi \left(
\lambda \right) x^{\left( 1\right) }, 
\]%
\begin{equation}
w_{+^{\left( 1\right) }}=x_{1}^{\left( 1\right) }+x_{2}^{\left( 1\right)
},w_{\left[ ,\right] }=a\left[ x_{1}^{\left( 1\right) },x_{2}^{\left(
1\right) }\right] ,  \label{possibles_words}
\end{equation}%
\[
w_{0^{\left( 2\right) }}=0^{\left( 2\right) },w_{-^{\left( 2\right)
}}=-x^{\left( 2\right) },w_{\lambda ^{\left( 2\right) }}=\varphi \left(
\lambda \right) x^{\left( 2\right) }, 
\]%
\[
w_{+^{\left( 2\right) }}=x_{1}^{\left( 2\right) }+x_{2}^{\left( 2\right)
},w_{\circ }=a\left( x^{\left( 1\right) }\circ x^{\left( 2\right) }\right) , 
\]%
where $\varphi \in \mathrm{Aut}k$, $a\in k^{\ast }$.{}
\end{proposition}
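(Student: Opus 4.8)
The plan is to exploit Condition \ref{operationconditions}: for each $F=F\left( X\right) $ there is a bijection $s_{F}$, fixing the generators, which is an isomorphism $F\rightarrow F_{W}^{\ast }$. Since $F\in \Theta $ and isomorphisms transport identities, $F_{W}^{\ast }$ again satisfies every identity of $\Theta $, now read with the verbal operations $w_{\omega }^{\ast }$ in place of the named operations $\omega $. Thus each $w_{\omega }$ is an element of the free representation $F_{\omega }$ described by Theorem \ref{homogident}, and the defining identities of the representations translate into functional equations among the $w_{\omega }$. The whole argument amounts to solving these equations using the polyhomogeneous grading of $L\left( X^{\left( 1\right) }\right) $ and $A_{\Theta }\left( X^{\left( 1\right) }\right) $.

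First I would dispose of the operations whose ambient free algebra is small. The nullary $0^{\left( 1\right) },0^{\left( 2\right) }$ live in $F_{\Theta }\left( \varnothing \right) $, whose two sorts are $L\left( \varnothing \right) =\left\{ 0\right\} $ and the empty direct sum, so $w_{0^{\left( 1\right) }}=0^{\left( 1\right) }$ and $w_{0^{\left( 2\right) }}=0^{\left( 2\right) }$ are forced. The unary sort-$1$ words $w_{-^{\left( 1\right) }},w_{\lambda ^{\left( 1\right) }}$ lie in $\left( F_{\Theta }\left( x^{\left( 1\right) }\right) \right) ^{\left( 1\right) }=L\left( x^{\left( 1\right) }\right) =kx^{\left( 1\right) }$, which is one-dimensional, so they are scalar multiples $\psi \left( \lambda \right) x^{\left( 1\right) }$; the identities $1\cdot x=x$ and $\lambda \left( \mu x\right) =\left( \lambda \mu \right) x$ make $\psi $ multiplicative and unit-preserving, while evaluating $\left( \lambda +\mu \right) x=\lambda x+\mu x$ on the \emph{one-generator} representation $kx^{\left( 1\right) }$ (where every bracket degenerates, so the higher-degree part of $w_{+^{\left( 1\right) }}$ vanishes automatically) forces additivity of $\psi $ directly; hence $\psi $ is an injective ring endomorphism of $k$, and an automorphism $\varphi \in \mathrm{Aut}\,k$ by invertibility of $\Phi $. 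The same one-dimensionality applied to $\left( F_{\Theta }\left( x^{\left( 2\right) }\right) \right) ^{\left( 2\right) }=A_{\Theta }\left( \varnothing \right) x^{\left( 2\right) }=kx^{\left( 2\right) }$ gives $w_{-^{\left( 2\right) }},w_{\lambda ^{\left( 2\right) }}$ as scalar multiples (with automorphism $\psi _{2}$), and since $\left( F_{\Theta }\left( x_{1}^{\left( 2\right) },x_{2}^{\left( 2\right) }\right) \right) ^{\left( 2\right) }=kx_{1}^{\left( 2\right) }\oplus kx_{2}^{\left( 2\right) }$ carries no brackets, $w_{+^{\left( 2\right) }}$ is immediately linear; commutativity, associativity and neutrality of $0^{\left( 2\right) }$ then pin it to $x_{1}^{\left( 2\right) }+x_{2}^{\left( 2\right) }$.

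The two genuinely nontrivial words are $w_{+^{\left( 1\right) }}$ and $w_{\circ }$, and here the grading does the work. For $w_{+^{\left( 1\right) }}\in L\left( x_{1}^{\left( 1\right) },x_{2}^{\left( 1\right) }\right) $, neutrality of $0^{\left( 1\right) }$ (substitute $x_{2}\mapsto 0$) kills all terms in $x_{2}$ alone and, since $\left[ x_{1},x_{1}\right] =0$, leaves the linear part equal to $x_{1}$; symmetrically for $x_{1}\mapsto 0$. Writing $w_{+^{\left( 1\right) }}=x_{1}+x_{2}+B$ with $B$ a sum of mixed brackets of total degree $\geq 2$, the distributivity identity $w_{\lambda }\left( w_{+}\left( x,y\right) \right) =w_{+}\left( w_{\lambda }x,w_{\lambda }y\right) $ forces, on the homogeneous component $B_{m}$ of degree $m$, the relation $\varphi \left( \lambda \right) B_{m}=\varphi \left( \lambda \right) ^{m}B_{m}$; taking $\lambda $ with $\varphi \left( \lambda \right) =2$ gives $2B_{m}=2^{m}B_{m}$, so $B_{m}=0$ and $w_{+^{\left( 1\right) }}=x_{1}^{\left( 1\right) }+x_{2}^{\left( 1\right) }$. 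For $w_{\circ }\in \left( F_{\Theta }\left( x^{\left( 1\right) },x^{\left( 2\right) }\right) \right) ^{\left( 2\right) }=A_{\Theta }\left( x^{\left( 1\right) }\right) x^{\left( 2\right) }$ one writes $w_{\circ }=f\left( x^{\left( 1\right) }\right) x^{\left( 2\right) }$ with $f=\sum_{n\geq 0}c_{n}\left( x^{\left( 1\right) }\right) ^{n}$; the identity $0^{\left( 1\right) }\circ v=0$ removes $c_{0}$, and $\left( \lambda l\right) \circ v=\lambda \left( l\circ v\right) $ gives $c_{n}\varphi \left( \lambda \right) ^{n}=\psi _{2}\left( \lambda \right) c_{n}$ for all $\lambda $. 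The degree-one term is nonzero (were $c_{1}=0$ with some $c_{n}\neq 0$, this would force $\psi _{2}\left( \lambda \right) =\varphi \left( \lambda \right) ^{n}$, impossible for the additive $\psi _{2}$ when $n\geq 2$ in characteristic $0$), so $\psi _{2}=\varphi $, and then $c_{n}\left( \varphi \left( \lambda \right) ^{n}-\varphi \left( \lambda \right) \right) =0$ kills $c_{n}$ for $n\geq 2$; thus $w_{\circ }=a\left( x^{\left( 1\right) }\circ x^{\left( 2\right) }\right) $ with $a\in k^{\ast }$ (nonzero since nondegeneracy prevents the action in $F_{W}^{\ast }$ from vanishing). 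The same bilinear-homogeneity argument applied to $w_{\left[ ,\right] }\in L\left( x_{1}^{\left( 1\right) },x_{2}^{\left( 1\right) }\right) $, using $\left[ \lambda l_{1},l_{2}\right] =\lambda \left[ l_{1},l_{2}\right] $ in each variable, shows $w_{\left[ ,\right] }=b\left[ x_{1}^{\left( 1\right) },x_{2}^{\left( 1\right) }\right] $, the unique degree-$\left( 1,1\right) $ Lie monomial.

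The crux, and the only place the nondegeneracy hypothesis is indispensable, is matching the two scalars $a$ and $b$. I would feed $w_{\left[ ,\right] }$ and $w_{\circ }$ into the structural identity $\left[ l_{1},l_{2}\right] \circ v=l_{1}\circ \left( l_{2}\circ v\right) -l_{2}\circ \left( l_{1}\circ v\right) $, read in $F_{W}^{\ast }$. The left-hand side evaluates to $ab\left( \left[ l_{1},l_{2}\right] \circ v\right) $ and the right-hand side to $a^{2}\left( \left[ l_{1},l_{2}\right] \circ v\right) $; since $\Theta $ is nondegenerate the element $\left[ l_{1}^{\left( 1\right) },l_{2}^{\left( 1\right) }\right] \circ v^{\left( 2\right) }$ is nonzero in a suitable free representation, so $ab=a^{2}$ and $b=a$. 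This is exactly the step that collapses in the degenerate case, where $\left[ x_{1},x_{2}\right] \circ x^{\left( 2\right) }=0$ decouples the coefficients of the bracket and of the action; accordingly I expect the main technical obstacle to be the verification that $\left[ l_{1}^{\left( 1\right) },l_{2}^{\left( 1\right) }\right] \circ v^{\left( 2\right) }\neq 0$ under nondegeneracy, together with the bookkeeping that ties $\psi _{2}$ back to the single automorphism $\varphi $ through $\circ $.
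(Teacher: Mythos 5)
Your proposal is correct and follows essentially the same route as the paper: each word is pinned down in the appropriate small free algebra using the polyhomogeneous grading from Theorem \ref{homogident} together with the scalar-multiplication identities, the scalars $a$ and $b$ are matched via the module axiom $\left[ l_{1},l_{2}\right] \circ v=l_{1}\circ \left( l_{2}\circ v\right) -l_{2}\circ \left( l_{1}\circ v\right) $ with nondegeneracy guaranteeing $\left[ x_{1}^{\left(1\right)},x_{2}^{\left(1\right)}\right] \circ x^{\left(2\right)}\neq 0$, and the two field automorphisms are identified through the bilinearity of $\circ $. The only difference is presentational: you carry out inline the sort-$1$ computations that the paper delegates to \cite{PlotkinZhitom} and \cite{TsurkovClassicalVar}.
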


\begin{proof}
By computations in $F_{\Theta }\left( \varnothing \right) $, $F_{\Theta
}\left( x^{\left( 1\right) }\right) $ and $F_{\Theta }\left( x_{1}^{\left(
1\right) },x_{2}^{\left( 1\right) }\right) $ we can conclude that $%
w_{0^{\left( 1\right) }}=0^{\left( 1\right) }$, $w_{-^{\left( 1\right)
}}=-x^{\left( 1\right) }$, $w_{\lambda ^{\left( 1\right) }}=\varphi \left(
\lambda \right) x^{\left( 1\right) }$, $w_{+^{\left( 1\right)
}}=x_{1}^{\left( 1\right) }+x_{2}^{\left( 1\right) }$, $w_{\left[ ,\right]
}=a\left[ x_{1}^{\left( 1\right) },x_{2}^{\left( 1\right) }\right] $, where $%
\varphi \in \mathrm{Aut}k$, $a\in k^{\ast }$. These computations can be seen
in \cite{PlotkinZhitom} and \cite[Section 4]{TsurkovClassicalVar}. By the
computations in $F_{\Theta }\left( \varnothing \right) $, $F_{\Theta }\left(
x^{\left( 2\right) }\right) $ and $F_{\Theta }\left( x_{1}^{\left( 2\right)
},x_{2}^{\left( 2\right) }\right) $ we can conclude that $w_{0^{\left(
2\right) }}=0^{\left( 2\right) }$, $w_{-^{\left( 2\right) }}=-x^{\left(
2\right) }$, $w_{\lambda ^{\left( 2\right) }}=\psi \left( \lambda \right)
x^{\left( 2\right) }$, $w_{+^{\left( 2\right) }}=x_{1}^{\left( 2\right)
}+x_{2}^{\left( 2\right) }$, where $\psi \in \mathrm{Aut}k$. These
computations are very simple. $w_{\circ }$ must be calculated in $F_{\Theta
}\left( x^{\left( 1\right) },x^{\left( 2\right) }\right) $. $\Theta $ is a
nondegenerated variety, so the identity $x^{\left( 1\right) }\circ x^{\left(
2\right) }=0$ is not fulfilled in this variety. Therefore, by Theorem \ref%
{homogident} $A_{\Theta }\left( x^{\left( 1\right) }\right) =A\left(
x^{\left( 1\right) }\right) /S_{\Theta }\left( x^{\left( 1\right) }\right) =k%
\left[ x^{\left( 1\right) }\right] /\left( \left( x^{\left( 1\right)
}\right) ^{d}\right) $, where $d>1$, and $w_{\circ }\left( x_{1}^{\left(
2\right) },x_{2}^{\left( 2\right) }\right) =f\left( x^{\left( 1\right)
}\right) \circ x^{\left( 2\right) }$, where $f\left( x^{\left( 1\right)
}\right) \in k\left[ x^{\left( 1\right) }\right] $, $\deg f\left( x^{\left(
1\right) }\right) <d$.

We use the method of the \cite[Theorem 5.4]{TsurkovManySorted}. In $%
F_{\Theta }\left( x^{\left( 1\right) },x^{\left( 2\right) }\right) $ the
equality $\lambda \left( x^{\left( 1\right) }\circ x^{\left( 2\right)
}\right) =\left( \lambda x^{\left( 1\right) }\right) \circ x^{\left(
2\right) }$ holds for every $\lambda \in k$. If there exists a bijection $%
s_{F}:F\rightarrow F$, where $F=F_{\Theta }\left( x^{\left( 1\right)
},x^{\left( 2\right) }\right) $, such that $\left( s_{F}\right) _{\mid
X}=id_{X}$, where $X=\left\{ x^{\left( 1\right) },x^{\left( 2\right)
}\right\} $, and $s_{F}:F\rightarrow F_{W}^{\ast }$ is an isomorphism, then $%
s_{F}\left( \lambda \left( x^{\left( 1\right) }\circ x^{\left( 2\right)
}\right) \right) =w_{\lambda ^{\left( 2\right) }}\left( w_{\circ }\left(
x^{\left( 1\right) },x^{\left( 2\right) }\right) \right) $ and $s_{F}\left(
\left( \lambda x^{\left( 1\right) }\right) \circ x^{\left( 2\right) }\right)
=w_{\circ }\left( w_{\lambda ^{\left( 1\right) }}\left( x^{\left( 1\right)
}\right) ,x^{\left( 2\right) }\right) $. Now we can conclude that $w_{\circ
}\left( x_{1}^{\left( 2\right) },x_{2}^{\left( 2\right) }\right) =bx^{\left(
1\right) }\circ x^{\left( 2\right) }$, where $b\in k\setminus \left\{
0\right\} $.

We consider the nondegenerated variety. Thus, by the same method, we
conclude that $a=b$ from the equality $\left[ x_{1}^{\left( 1\right)
},x_{2}^{\left( 1\right) }\right] \circ x^{\left( 2\right) }=x_{1}^{\left(
1\right) }\circ \left( x_{2}^{\left( 1\right) }\circ x^{\left( 2\right)
}\right) -x_{2}^{\left( 1\right) }\circ \left( x_{1}^{\left( 1\right) }\circ
x^{\left( 2\right) }\right) $ in $F_{\Theta }\left( x_{1}^{\left( 1\right)
},x_{2}^{\left( 1\right) },x^{\left( 2\right) }\right) $.

Also we conclude $\varphi =\psi $ from the equality $\left( \lambda
x^{\left( 1\right) }\right) \circ x^{\left( 2\right) }=x^{\left( 1\right)
}\circ \left( \lambda x^{\left( 2\right) }\right) $ which holds in $%
F_{\Theta }\left( x^{\left( 1\right) },x^{\left( 2\right) }\right) $ for
every $\lambda \in k$.
\end{proof}

\begin{theorem}
\label{mainTheorem}If $\Theta $ is an action-type and nondegenerated variety
of representations and this variety is defined by identities with
coefficients from $%
\mathbb{Z}
$, then $\mathfrak{A/Y\cong }\mathrm{Aut}k$.
\end{theorem}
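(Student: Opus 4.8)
The plan is to combine the structural results already established.  By Proposition~\ref{propMonoiso}, Condition~\ref{monoiso} holds for an action-type nontrivial variety, and since a nondegenerated variety is nontrivial, the machinery of Section~4 applies: we have $\mathfrak{A/Y\cong S/S\cap Y}$.  So it suffices to compute the group $\mathfrak{S}$ of strongly stable automorphisms and its intersection $\mathfrak{S\cap Y}$ with the inner automorphisms, and to identify the quotient with $\mathrm{Aut}\,k$.

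**Computing $\mathfrak{S}$.**
First I would invoke the previous Proposition, which classifies every system of words $W$ satisfying Condition~\ref{operationconditions} as having the form (\ref{possibles_words}), parametrized by a pair $(\varphi,a)$ with $\varphi\in\mathrm{Aut}\,k$ and $a\in k^{\ast}$.  Thus each strongly stable automorphism is determined by such a pair, and I would check that composition of automorphisms corresponds to a group operation on these pairs, so that $\mathfrak{S}$ is (anti)isomorphic to a group built from $\mathrm{Aut}\,k$ and $k^{\ast}$; concretely the scalar $a$ rescales the bracket and the action $\circ$, while $\varphi$ twists the $k$-action on both sorts.

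**Identifying $\mathfrak{S\cap Y}$ and the main obstacle.**
The heart of the argument is to determine which of these $(\varphi,a)$ give \emph{inner} automorphisms, using Criterion~\ref{intersectioncriterion}: the automorphism is inner iff there is a compatible system of isomorphisms $\tau_F:F\rightarrow F_{W}^{\ast}$ commuting with all morphisms.  Here is where the hypothesis that $\Theta$ is defined by identities with coefficients in $\mathbb{Z}$ enters decisively.  I expect the pure scaling part (the parameter $a$) to always be realizable by such a $\tau_F$ — rescaling the sort-$2$ generators of each free algebra by an appropriate power reconciles the $a$-twisted operations with the standard ones, because the $\mathbb{Z}$-coefficient identities are preserved under scalar substitutions.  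Hence every automorphism with $\varphi=\mathrm{id}$ lies in $\mathfrak{Y}$.  Conversely, I would show the field-automorphism part $\varphi$ is an obstruction: a nontrivial $\varphi$ changes how scalars $\lambda x$ are interpreted, and no sort-preserving isomorphism $\tau_F$ fixing the generators can absorb this, precisely because the defining identities have integer coefficients and are therefore invariant under $\varphi$, leaving the twist genuinely visible.  This shows $\mathfrak{S\cap Y}$ consists exactly of the $\varphi=\mathrm{id}$ automorphisms.

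**Conclusion.**
Passing to the quotient, the map $(\varphi,a)\mapsto\varphi$ descends to an isomorphism $\mathfrak{S/S\cap Y}\cong\mathrm{Aut}\,k$, whence $\mathfrak{A/Y\cong\mathrm{Aut}\,k}$.  The subtle step, and the one I would write out most carefully, is the verification that $a$ can always be eliminated by an inner isomorphism while $\varphi$ cannot — this is exactly the place where the $\mathbb{Z}$-coefficient hypothesis is used, and getting the compatibility of the $\tau_F$ across \emph{all} morphisms (not just on a single free algebra) is the delicate part of applying Criterion~\ref{intersectioncriterion}.
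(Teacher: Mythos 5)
Your overall strategy (reduce to $\mathfrak{S}/\mathfrak{S\cap Y}$ via Proposition \ref{propMonoiso} and Condition \ref{monoiso}, then show the class of a strongly stable automorphism is detected by $\varphi$ alone) matches the paper's, and your claims about which automorphisms are inner ($\varphi =id_{k}$, arbitrary $a$) agree with the paper. But there is a genuine gap, and it concerns precisely the step you identify as "the heart of the argument." The preceding Proposition is only a \emph{necessity} statement: \emph{if} a system $W$ satisfies Condition \ref{operationconditions}, \emph{then} it has the form (\ref{possibles_words}). It does not say that every pair $(\varphi ,a)\in \mathrm{Aut}k\times k^{\ast }$ actually arises, i.e.\ that for every such pair the bijections $s_{F}:F\rightarrow F_{W}^{\ast }$ required by Condition \ref{operationconditions}(2) exist on the relatively free algebras $F_{\Theta }\left( X\right) $. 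Without this existence step you only get an embedding of $\mathfrak{S/S\cap Y}$ into $\mathrm{Aut}k$, not surjectivity, so the conclusion $\mathfrak{A/Y\cong }\mathrm{Aut}k$ is not established. Your write-up silently treats the classification as a bijection ("each strongly stable automorphism is determined by such a pair") and never checks realizability.

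This is also where you misplace the hypothesis that $\Theta $ is defined by identities with coefficients from $\mathbb{Z}$. In the paper that hypothesis is used exactly for the existence step: by Theorem \ref{homogident} the identities of sort $2$ can be taken of the form $fx^{\left( 2\right) }=0$ with $f=\sum_{i}\lambda _{i}m_{i}$ polyhomogeneous, and the isomorphism $\widetilde{s}_{F}$ on the absolutely free representation (which exists by the result quoted from the variety of all representations) sends $fx^{\left( 2\right) }$ to $\sum_{i}\varphi \left( \lambda _{i}\right) a^{\deg f}m_{i}x^{\left( 2\right) }$; since $\lambda _{i}\in \mathbb{Z}$ and $\mathrm{char}\,k=0$, $\varphi $ fixes the $\lambda _{i}$, this equals $a^{\deg f}fx^{\left( 2\right) }\in S_{\Theta }\left( X^{\left( 1\right) }\right) x^{\left( 2\right) }$, so $\widetilde{s}_{F}$ descends to $s_{F_{\Theta }}:F_{\Theta }\rightarrow \left( F_{\Theta }\right) _{W}^{\ast }$. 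If the coefficients were not $\varphi $-invariant, the twisted identity could fall outside $S_{\Theta }$ and the automorphism for that $\varphi $ would simply not exist in $\Theta ^{0}$. By contrast, the determination of $\mathfrak{S\cap Y}$ via Criterion \ref{intersectioncriterion} (inner iff $\varphi =id_{k}$) does not need the integrality of the coefficients; it is carried over from the argument for the variety of all representations. So you should move the weight of the $\mathbb{Z}$-hypothesis from the inner/outer analysis to the verification of Condition \ref{operationconditions}, and actually carry out that verification.
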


\begin{proof}
Now we will prove that Condition \ref{operationconditions} is fulfilled for
the systems of words $W$ which have a form (\ref{possibles_words}). We will
consider an absolutely free representation $F\left( X\right) =F$ generated
by the set $X=X^{\left( 1\right) }\cup X^{\left( 2\right) }$, such that $%
F^{\left( i\right) }\supset X^{\left( i\right) }$, $i=1,2$, where $X\in 
\mathfrak{F}\left( X_{0}\right) $. By \cite[Theorem 5.4]{TsurkovManySorted},
there exists an isomorphism $\widetilde{s}_{F}:F\rightarrow F_{W}^{\ast }$,
such that $\left( \widetilde{s}_{F}\right) _{\mid X}=id_{X}$. There exists
the natural epimorphism $\nu :F\left( X\right) \rightarrow F_{\Theta }\left(
X\right) =F\left( X\right) /\mathrm{Id}_{\Theta }\left( X\right) $, where $%
F_{\Theta }\left( X\right) =F_{\Theta }$ is a free representation of the
variety $\Theta $ generated by the set $X$. In truth, $F_{\Theta }\left(
X\right) $ is generated by the set $\nu \left( X\right) $ but we will use
our short notation. The operations defined in the representations $%
F_{W}^{\ast }$ and $\left( F_{\Theta }\right) _{W}^{\ast }$ are the verbal
operations, so $\nu :F_{W}^{\ast }\rightarrow \left( F_{\Theta }\right)
_{W}^{\ast }$ is also an epimorphism. By Theorem \ref{homogident},%
\[
\left( F_{\Theta }\left( X\right) \right) ^{\left( 1\right) }=L\left(
X^{\left( 1\right) }\right) ,\left( F_{\Theta }\left( X\right) \right)
^{\left( 2\right) }=\bigoplus\limits_{x\in X^{\left( 2\right) }}\left(
\left( A\left( X^{\left( 1\right) }\right) /S_{\Theta }\left( X^{\left(
1\right) }\right) \right) x\right) , 
\]%
where $S_{\Theta }\left( X^{\left( 1\right) }\right) $ is a polyhomogeneous
two-sided ideal of $A\left( X^{\left( 1\right) }\right) $. So, the variety $%
\Theta $ can be defined by the identities, which have a form $fx^{\left(
2\right) }=0$, where $f$ is a polyhomogeneous element of the ideal $%
S_{\Theta }\left( X^{\left( 1\right) }\right) $. $f=\sum\limits_{i\in
I}\lambda _{i}m_{i}$, where $\left\vert I\right\vert <\infty $, $m_{i}$ are
monomials of $A\left( X^{\left( 1\right) }\right) $. By our assumption, we
can suppose that $\lambda _{i}\in 
\mathbb{Z}
$. So, as in \cite[proof of the Theorem 4.1]{TsurkovClassicalVar}, $%
\widetilde{s}_{F}\left( fx^{\left( 2\right) }\right) =\sum\limits_{i\in
I}\varphi \left( \lambda _{i}\right) a^{\deg f}m_{i}x^{\left( 2\right)
}=a^{\deg f}\sum\limits_{i\in I}\lambda _{i}m_{i}x^{\left( 2\right)
}=a^{\deg f}fx^{\left( 2\right) }\in S_{\Theta }\left( X^{\left( 1\right)
}\right) x^{\left( 2\right) }$. Hence, there exists a homomorphism $%
s_{F_{\Theta }}:F_{\Theta }\rightarrow \left( F_{\Theta }\right) _{W}^{\ast
} $, such that $s_{F_{\Theta }}\nu =\nu \widetilde{s}_{F}$. In particular,
if $x\in X$, then $s_{F_{\Theta }}\nu \left( x\right) =\nu \left( x\right) $%
, or, in our short notation, $\left( s_{F_{\Theta }}\right) _{\mid X}=id_{X}$%
.

As in \cite[proof of Theorem 5.4]{TsurkovManySorted}, we can prove, at
first, that $s_{F_{\Theta }}$ is an isomorphism, and then, by Criterion \ref%
{intersectioncriterion}, that the strongly stable automorphism which
corresponds to the system of words (\ref{possibles_words}) is inner if and
only if $\varphi =id_{k}$. Therefore $\mathfrak{A/Y\cong }\mathrm{Aut}k$.
\end{proof}

\section{Example\label{example}}

\setcounter{equation}{0}

In this section we consider a field $k$ such that $\mathrm{Aut}k\neq \left\{
id_{k}\right\} $. We will give an example of the variety $\Theta $ of
representations, which fulfills the conditions of the Theorem \ref%
{mainTheorem}, and two representations $H_{1},H_{2}\in \Theta $, such that
they are automorphically equivalent, but not geometrically equivalent. This
example is similar to the examples of \cite[Example 3]{TsurkovClassicalVar}
and \cite[Subsection 5.4]{TsurkovManySorted}.

We consider the variety $\Theta $ of representations, defined by identity%
\[
x_{1}^{\left( 1\right) }x_{2}^{\left( 1\right) }\ldots x_{5}^{\left(
1\right) }x_{6}^{\left( 1\right) }x^{\left( 2\right) }=0. 
\]%
In this variety we consider the free algebra $F=F_{\Theta }\left(
x_{1}^{\left( 1\right) },x_{2}^{\left( 1\right) },x^{\left( 2\right)
}\right) $.\linebreak $A_{\Theta }\left( x_{1}^{\left( 1\right)
},x_{2}^{\left( 1\right) }\right) $ contains two linear independent elements%
\[
\iota \left[ x_{1}^{\left( 1\right) },\left[ x_{1}^{\left( 1\right) },\left[ %
\left[ x_{1}^{\left( 1\right) },x_{2}^{\left( 1\right) }\right]
,x_{2}^{\left( 1\right) }\right] \right] \right] =e_{1} 
\]%
and%
\[
\iota \left[ \left[ x_{1}^{\left( 1\right) },\left[ x_{1}^{\left( 1\right)
},x_{2}^{\left( 1\right) }\right] \right] ,\left[ x_{1}^{\left( 1\right)
},x_{2}^{\left( 1\right) }\right] \right] =e_{2}. 
\]%
We denote by $W$ the system of words (\ref{possibles_words}), such that $a=1$%
, $\varphi \neq \left\{ id_{k}\right\} $, and by $\Phi $ the strongly stable
automorphism of the category $\Theta ^{0}$, defined by the system of words $%
W $. There exists $\lambda \in k$ such that $\varphi \left( \lambda \right)
\neq \lambda $. We denote $t=\lambda e_{1}+e_{2}$. We denote by $T$ the
two-sided ideal of the algebra $A_{\Theta }\left( x_{1}^{\left( 1\right)
},x_{2}^{\left( 1\right) }\right) $ generated by the element $t$. In truth, $%
T=\mathrm{sp}_{k}\left( t\right) $. We denote by $H$ the representation such
that $\left( H\right) ^{\left( 1\right) }=\left( F\right) ^{\left( 1\right)
} $, $\left( H\right) ^{\left( 2\right) }=\left( A_{\Theta }\left(
x_{1}^{\left( 1\right) },x_{2}^{\left( 1\right) }\right) /T\right) x^{\left(
2\right) }$. $H\in \Theta $ by the Birkhoff Theorem. $H_{W}^{\ast }\in
\Theta $ by \cite[Proposition 3.5]{TsurkovManySorted}. By \cite[Corollary 1
from the Proposition 4.2]{TsurkovManySorted}, the representations $H$ and $%
H_{W}^{\ast }$ are automorphically equivalent. By computations which are
very similar to the computations of \cite[Example 3]{TsurkovClassicalVar}
and \cite[Subsection 5.4]{TsurkovManySorted}, $H$ and $H_{W}^{\ast }$ are
not geometrically equivalent.

\section{Acknowledgements}

I would like to acknowledge the support of PNPD CAPES -- Programa Nacional
de P\'{o}s-Doutorado da Coordena\c{c}\~{a}o de Aperfei\c{c}oamento de
Pessoal de N\'{\i}vel Superior (National Postdoctoral Program of the
Coordination for the Improvement of Higher Education Personnel, Brazil) and
of CNPq - Conselho Nacional de Desenvolvimento Cient\'{\i}fico e Tecnol\'{o}%
gico (National Council for Scientific and Technological Development,
Brazil), Project 314045/2013-9, for bestowing a visiting researcher
scholarship.

I also would like to acknowledge Professors Nir Cohen and David Armando
Zavaleta Villanueva, who initiated my participation in these two programs.

\end{document}